\documentclass[11pt,a4paper]{article}
\usepackage{mathrsfs}
\usepackage{amsmath,amsthm,color,graphicx,eepic}
\usepackage{amssymb} \usepackage{verbatim}
\usepackage{dsfont}
\usepackage{graphicx}
\usepackage{subcaption}

\usepackage{hyperref}
\usepackage{xcolor}
\usepackage[normalem]{ulem}
\usepackage{cancel}

\newif\ifshowchanges
\newcommand{\added}[1]{\ifshowchanges{{\color{red}#1}}\else#1\fi}
\newcommand{\deleted}[1]{\ifshowchanges{{\color{red}\sout{#1}}}\else\relax\fi}
\newcommand{\replaced}[2]{\ifshowchanges{{\color{red}\sout{#1}} {\color{red}#2}}\else#2\fi}

\newcommand{\mreplaced}[2]{\ifshowchanges{{\color{red}\cancel{#1}\,#2}}\else#2\fi}

\usepackage{mathtools} %coloneqq

\newtheorem{theorem}{Theorem}%[section]

\newtheorem{conjecture}[theorem]{Conjecture}

\newtheorem{claim}{Claim}
\newtheorem*{claim*}{Claim}

\newcommand{\ex}{{\rm ex}}

\newcommand{\floor}[1]{\left\lfloor{#1}\right\rfloor}

\newcommand{\abs}[1]{\left\vert{#1}\right\vert}

\begin{document}

\title{Turán-Type Extremal Results for Distance-$k$ Graphs}
\author{
Zhen He\thanks{School of Mathematics and Statistics, Beijing Jiaotong University, Beijing, China.} \and
Nika Salia\thanks{Theoretical Computer Science Department, Faculty of Mathematics and Computer Science, Jagiellonian University, Krak\'{o}w, Poland.} \and
Casey Tompkins\thanks{HUN-REN Alfr\'ed R\'enyi Institute of Mathematics} \and
Xiutao Zhu\thanks{School of Mathematics, Nanjing University of Aeronautics and Astronautics, Nanjing, China.} \and
}
\date{}
\maketitle

\begin{abstract}
We study Tur\'an-type extremal problems for distance graphs, motivated by work of Csikv\'ari, Bollob\'as, Tyomkyn, and Uzzell.
We determine the maximum number of vertex pairs at distance three in an $n$-vertex graph with no triangle formed by these pairs, resolving the first case of a conjecture of Tyomkyn and Uzzell. We also determine the maximum number of vertex pairs at distance two in an $n$-vertex graph with no triangle formed by these pairs and give a complete characterization of the extremal graphs, settling another problem of Tyomkyn and Uzzell.
\end{abstract}

\section{Introduction}

Tur\'an's theorem determines the maximum number of edges in an \(n\)-vertex graph containing no clique of a fixed size. In this paper we study the corresponding problem in which edges are replaced by pairs of vertices at a prescribed distance. Given a graph \(G\) and an integer \(k \ge 1\), the \(k\)-distance graph \(G_k\) is the graph on \(V(G)\) in which two vertices are adjacent if and only if they are at distance exactly \(k\) in \(G\). Thus \(|E(G_k)|\) is the number of unordered pairs of vertices at distance \(k\) in \(G\).

This problem is related to earlier work on walks and paths in trees. Csikv\'ari~\cite{csikvari2010poset} proved that among \(n\)-vertex trees the star and the path maximize and minimize, respectively, the number of closed walks of length \(k\). Bollob\'as and Tyomkyn~\cite{bollobas2012walks} later gave another proof and extended the result to all walks of length \(k\). They also considered the problem of maximizing the number of paths of length \(k\) in an \(n\)-vertex tree, showing that the extremal examples belong to a family of trees they called \(t\)-brooms. Since in a tree every path of length \(k\) is determined by its endpoints, this is equivalent to maximizing the number of pairs of vertices at distance \(k\).

Tyomkyn and Uzzell~\cite{tyomkyn2013turan} initiated the corresponding problem for general graphs. More generally, for a fixed graph \(H\), define
\[
\operatorname{ex}_k(n,H):=\max\{|E(G_k)|:\ |V(G)|=n \text{ and } G_k \text{ is } H\text{-free}\}.
\]
The graphs $G$ for which $|E(G_k)|=\operatorname{ex}_k(n,H)$ are referred to as extremal graphs.
They proved that when \(k=2\), the star is an extremal graph. For \(k \ge 3\) and \(n\) sufficiently large, they conjectured that the maximum is attained by a \(t\)-broom for an appropriate choice of \(t\). For even \(k \ge 4\), a \(t\)-broom consists of a central vertex joined to one endpoint of each of \(t\) paths on \((k-2)/2\) vertices, with an arbitrary number of leaves attached to the other endpoint of each path. For odd \(k \ge 3\), a \(t\)-broom consists of a clique of size \(t\), where each clique vertex is joined to one endpoint of a path on \((k-3)/2\) vertices, again with leaves attached to the other endpoint of each path.

We consider here the problem of maximizing \(|E(G_k)|\) under a bound on the clique number of the distance graph \(G_k\). A particularly natural case is the triangle-free distance-two problem. Tyomkyn and Uzzell~\cite{tyomkyn2013turan} constructed an \(n\)-vertex graph \(G\) such that \(G_2\) is triangle-free and
$|E(G_2)|=\left\lfloor \frac{(n-1)^2}{4}\right\rfloor+1,$
and asked whether this value is the best possible. We \replaced{positivly}{positively} answer this question and determine all extremal graphs.

\begin{theorem}\label{G2}
For all $n \ge 5$,
\[
\ex_2(n,K_3) = \floor{ \frac{(n-1)^2}{4} } + 1.
\]
The extremal graphs are the complements of graphs obtained from a Tur\'an graph  $T(n-1,2)$ with bipartition $(A,B)$ 
by fixing a vertex $a_1 \in A$ and a non-trivial partition $B = B' \cup B''$, 
removing the edges between $a_1$ and $B'$, and introducing a new vertex $a_2$ 
with the neighbourhood $\{a_1\} \cup B'$. 
\end{theorem}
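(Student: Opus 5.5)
We first reduce to the case that \(G\) is connected, and then prove the upper bound by analysing the complement of \(G\).

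\emph{Reduction to connected \(G\).} Pairs at distance two lie inside a single component. If \(G\) has components of sizes \(n_1,\dots,n_r\) with \(r\ge 2\), then \(G_2\) is a disjoint union of triangle-free graphs on \(n_i\) vertices. Hence \(|E(G_2)|\le \sum_i \lfloor n_i^2/4\rfloor\). This is at most \(\lfloor (n-1)^2/4\rfloor\) whenever some \(n_i=1\); when all \(n_i\ge 2\) we use the cruder bound \(\sum_i n_i(n_i-1)/2\)-style estimates together with triangle-freeness. So from now on we assume \(G\) is connected.

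\emph{Setting up the complement.} In the connected case the non-adjacent pairs of \(G\) split into pairs at distance exactly two (edges of \(G_2\)) and pairs at distance at least three. Write \(H=\overline{G}\). Then \(E(G_2)\subseteq E(H)\). A triangle \(xyz\) in \(H\) fails to lie in \(G_2\) only if one of its pairs, say \(xy\), has no common \(G\)-neighbour, that is, \(N_G(x)\cap N_G(y)=\emptyset\).

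\emph{Using a vertex of minimum degree in \(G_2\).} Choose a vertex \(v\) and set \(A=N_{G_2}(v)\). Since \(G_2\) is triangle-free, \(A\) is independent in \(G_2\), so every pair inside \(A\) is at distance \(1\) or at least \(3\) in \(G\). We will argue that pairs at distance at least \(3\) are rare. Indeed, every \(a\in A\) has a common neighbour with \(v\) in \(N_G(v)\), so distinct \(a,a'\in A\) are at distance at most \(4\). Distances \(3\) and \(4\) force the paths to run through different vertices of \(N_G(v)\).

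\emph{Counting via a near-bipartition.} The cleaner route is the following. Let \(D=N_G(v)\) and \(R=V\setminus(D\cup\{v\})\). Every \(G_2\)-edge incident to \(v\) goes to \(R\) or to \(D\) (within \(D\) only through \(v\) itself, which is impossible, so in fact only to \(R\) at distance exactly 2). Choose \(v\) to maximise \(|D|\), i.e. \(v\) has maximum \(G\)-degree \(\Delta\). Pairs inside \(D\) are all at distance at most \(2\) through \(v\). Hence the non-adjacent pairs inside \(D\) form a triangle-free subgraph of \(G_2\). Pairs between \(v\) and \(R\) are at distance \(2\) or more.

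We split \(|E(G_2)|\) as
\[
e_{G_2}(D)+e_{G_2}(D,R)+e_{G_2}(R)+d_{G_2}(v).
\]
Then we apply Mantel's theorem to \(G_2[D\cup R]\), which has \(n-1\) vertices, giving \(|E(G_2)|\le\lfloor (n-1)^2/4\rfloor + d_{G_2}(v)\). The remaining task is to show that the \(G_2\)-edges at \(v\) cost at most one extra. If \(v\) has at least two \(G_2\)-neighbours \(r,r'\in R\), then \(r,r'\) are not adjacent in \(G_2\) (triangle-freeness). This forces a deficit relative to Mantel. The deficit comes either from a stability version of Mantel (an \((n-1)\)-vertex triangle-free graph with \(m\) edges close to \((n-1)^2/4\) is bipartite-like) or from the pair \(rr'\) being at distance \(1\) in \(G\), which removes \(G_2\)-edges elsewhere. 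Making this exchange argument precise is the main obstacle. We expect to handle it by choosing \(v\) so that \(d_{G_2}(v)\) is minimal and using \(d_{G_2}(v)\le 2|E(G_2)|/n\) together with the edge deletion bound \(|E(G_2)|-d_{G_2}(v)\le \lfloor (n-1)^2/4\rfloor\) recursively. If \(d_{G_2}(v)\ge 2\) at the minimum-degree vertex, \(G_2\) has minimum degree at least \(2\). We then induct on \(n\), removing \(v\) and checking whether distances among the other vertices change. Such a change can only happen if \(v\) is a cut-type vertex, and we treat that case separately.

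\emph{Equality.} In equality we must have \(G_2[V\setminus\{v\}]\) equal to a Turán graph \(T(n-1,2)\) and \(d_{G_2}(v)=1\). Writing \(G_2=H\) minus the far pairs and translating back to \(G\) then yields exactly the described complement structure. The vertex \(a_2\) plays the role of \(v\), its unique \(G_2\)-neighbour is \(a_1\), and the split \(B=B'\cup B''\) records which vertices of \(B\) are common \(G\)-neighbours of \(a_1\) and \(a_2\). The split must be non-trivial to keep \(a_1a_2\) at distance two while keeping the other pairs as required. Finally, we verify directly that these graphs attain the bound.
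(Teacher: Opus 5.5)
There is a genuine gap at exactly the step you call ``the main obstacle''. The bound $|E(G_2)|\le\lfloor (n-1)^2/4\rfloor+d_{G_2}(v)$ is just Mantel on $n-1$ vertices. The whole theorem is the claim that the extra $d_{G_2}(v)$ costs at most one, and none of your sketches proves it.

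\textbf{Triangle-freeness alone cannot give it.} $T(n,2)$ is triangle-free with $\lfloor n^2/4\rfloor=\lfloor (n-1)^2/4\rfloor+\lfloor n/2\rfloor$ edges. Two non-adjacent $G_2$-neighbours $r,r'$ of $v$ force no deficit: put them in the same class of $T(n-1,2)$. A stability version of Mantel does not help either, because the obstruction $T(n,2)$ is itself extremal. What must be excluded is that $G_2$ is (nearly) complete bipartite, and that needs a property specific to distance graphs, which you never isolate.

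\textbf{The inductive route fails.} Deleting $v$ changes the distance of every pair whose only common neighbour is $v$, and this has nothing to do with cut vertices: in $C_5$, deleting a vertex moves its two neighbours from distance $2$ to $3$. Since deletion only increases distances, you only get $(G-v)_2\subseteq G_2-v$, which is the wrong direction for bounding $|E(G_2-v)|$ by induction.

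\textbf{The missing idea is to split according to whether $G_2$ is bipartite.}
\begin{itemize}
\item If $G_2$ is not bipartite, the Erd\H{o}s--Gallai/Andr\'asfai theorem gives $|E(G_2)|\le\lfloor (n-1)^2/4\rfloor+1$ at once. The Kang--Pikhurko characterisation then identifies $G_2$ in the equality case.
\item If $G_2$ is bipartite with classes $(A,B)$, take $v$ of maximum $G_2$-degree $\Delta$, with $N_{G_2}(v)\subseteq A$. Either $|A|>\Delta$, and then $|E(G_2)|\le\Delta|B|\le\Delta(n-1-\Delta)$. Or $A=N_{G_2}(v)$; then each $u\in A$ has a common $G$-neighbour $x_u$ with $v$. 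This $x_u$ lies in $B$ but is not $G_2$-adjacent to $u$, so $d_{G_2}(u)\le n-1-\Delta$, and again $|E(G_2)|\le\lfloor (n-1)^2/4\rfloor$.
\end{itemize}
That vertex $x_u$ is precisely the distance-specific deficit your argument lacks.

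\textbf{Your equality analysis is also wrong.} $T(n-1,2)$ plus a vertex of $G_2$-degree one is bipartite. By the bipartite bound above, it is never a distance-two graph with $\lfloor (n-1)^2/4\rfloor+1$ edges.
\begin{itemize}
\item In the actual extremal graph, $a_2$ has $G_2$-neighbourhood $\{a_1\}\cup B'$ with $B'\neq\emptyset$, and $G_2$ has minimum degree at least $2$.
\item In $G=\overline{G_2}$, the sets $B'$ and $B''$ are the disjoint $G$-neighbourhoods of $a_1$ and $a_2$ inside $B$. The common $G$-neighbours of $a_1$ and $a_2$ are exactly $A\setminus\{a_1\}$, whatever the split.
\item Non-triviality of the split is what makes $G_2$ non-bipartite (it creates the $5$-cycle $a_1a_2b'ab''$). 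It is not what keeps $a_1a_2$ at distance two.
\item Identifying $G_2$ only gives $G\subseteq\overline{G_2}$. You still need to show that every edge of $\overline{G_2}$ is forced, as the paper does. Certain common neighbours are required for the prescribed distance-two pairs. Then any pair that is non-adjacent in $G_2$ but joined by a forced path of length two must be an edge of $G$.
\end{itemize}
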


Going beyond distance $2$-graphs, Tyomkyn and Uzzell~\cite{tyomkyn2013turan} considered the problem of maximizing $|E(G_k)|$ when $G_k$ contains no clique of size $t+1$.

\begin{conjecture}[Tyomkyn--Uzzell~\cite{tyomkyn2013turan}]
Let $k\ge 3$ and $t\ge 2$. There is a function $h(k,t)$ such that if $n\ge h(k,t)$, then the maximum of $|E(G_k)|$ over $n$-vertex graphs $G$ with $\omega(G_k)\le t$ is attained when $G_k$ is isomorphic to the $k$-distance graph of a $p$-broom.
\end{conjecture}

As a step in proving this conjecture, they gave the following sharp result in the case of $t=2$ for $k$ and $n$ \replaced{taken to be sufficiently large}{sufficiently large}.
\begin{theorem}[Tyomkyn--Uzzell~\cite{tyomkyn2013turan}]\label{TU}
For $k$ sufficiently large and $n$ sufficiently large as a function of $k$, every $n$-vertex graph $G$ such that $G_k$ is triangle-free satisfies
\[
|E(G_k)| \le \frac{(n-k+1)^2}{4}.
\]
Moreover, if equality holds, then $G_k$ is isomorphic to $D_k$, where $D$ is a double broom.
\end{theorem}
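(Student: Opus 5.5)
The statement is Theorem~\ref{TU} of Tyomkyn and Uzzell, quoted from~\cite{tyomkyn2013turan}; the paper does not prove it. What follows is how I would try to prove it myself.

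I would argue by stability followed by exact cleanup. First I reduce to connected $G$. Pairs in different components are at infinite distance, so $G_k$ is a disjoint union of the distance graphs of the components. Since $x\mapsto (x-k+1)^2/4$ is superadditive in the relevant range, a component with $m<n$ vertices contributes at most $(m-k+1)^2/4$ for large $m$ and $O(1)$ for small $m$. Induction on $n$, together with a crude bound for small components, then handles the disconnected case.

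For connected $G$, suppose $|E(G_k)|\ge (n-k+1)^2/4$. Then $G_k$ is triangle-free with $n^2/4-O(kn)$ edges. By the Erd\H{o}s--Simonovits/F\"uredi stability theorem for triangle-free graphs, $V(G)=X\cup Y\cup M$ where:
\begin{itemize}
\item $|X|,|Y|=n/2-O(\sqrt{kn})$;
\item all but $O(kn)$ pairs in $X\times Y$ are at distance exactly $k$ in $G$;
\item $M$ consists of vertices of small $G_k$-degree.
\end{itemize}
The goal is to show $|M|\ge k-1$, and more precisely that $M$ is effectively the interior of a path of length $k$.

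Next I fix a typical $x_0\in X$ and BFS-layer $G$ from $x_0$ as $L_0,L_1,\dots$. Almost all of $Y$ lies in $L_k$. Triangle-freeness of $G_k$ inside $X$ means vertices of $X$ are pairwise not at distance $k$. I would use this, together with the fact that $X$-vertices have $G_k$-neighbourhoods almost all of $Y$, to show that almost all of $X$ lies within a bounded distance of $x_0$, in fact in $L_0\cup L_1$ after cleanup. The two-sided version of this places almost all of $Y$ in $L_{k-1}\cup L_k$. Each intermediate layer $L_2,\dots,L_{k-2}$ is nonempty, since it must contain vertices of a shortest $x_0$--$Y$ path. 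This gives at least $k-3$ extra vertices. Refining with the actual "centres" (the common neighbours through which $X$ reaches $Y$) should push this to $k-1$ vertices outside the two leaf-classes, as in the double broom.

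The counting step bounds $|E(G_k)|\le |X'||Y'|+e_M$. Here $X',Y'$ are the cleaned leaf-classes and $e_M$ counts $G_k$-edges meeting the middle set $M$, with $|M|\ge k-1$ and $|X'|+|Y'|\le n-|M|$. A middle vertex in layer $L_i$ with $2\le i\le k-2$ is at distance $k$ from very few vertices, since $X'$ is within distance $\le 1+i<k$ and $Y'$ within distance $\le k-i<k$. So $e_M$ is tiny, and the bound $\le (n-k+1)^2/4$ follows, with any surplus of middle vertices costing about $n/2$ each. In the equality case, all leaves must hang on two centres joined by a path of length $k-2$, with $|X'|$ and $|Y'|$ balanced. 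Possibly some extra edges remain that do not change $G_k$, which is why the conclusion is stated up to isomorphism of $G_k$ with $D_k$.

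I expect the main obstacle to be the localisation step: showing rigorously that the bulk of $X$ and $Y$ lies near two centres rather than spreading across many layers or along long cycles. It is here that $k$ must be large, to exclude short-cycle configurations in which distances wrap around. I would first try to use the fact that a vertex $w\in X$ and its many $G_k$-neighbours in $Y$ force $w$'s BFS tree to resemble $x_0$'s up to a shift. Triangle-freeness then forbids the shift from being nonzero, since a nonzero shift would create a pair in $X$ at distance $k$.
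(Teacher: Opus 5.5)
You are right that the paper only quotes this theorem from Tyomkyn and Uzzell and gives no proof, so your outline has to stand on its own. It does not: there is a genuine gap at the localisation step, and the mechanism you suggest for it is false.

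Triangle-freeness of $G_k$ only prevents two vertices with a common $G_k$-neighbour from being at distance \emph{exactly} $k$; it does not keep $X$ near $x_0$. For a counterexample, take a set $Y$ of size $m$ and two nonadjacent vertices $c,c'$ joined to all of $Y$. Add paths $c=q_0q_1\cdots q_{k-2}$ and $c'=q_0'q_1'\cdots q_{k-2}'$, and attach $m/2$ leaves to $q_{k-2}$ and $m/2$ leaves to $q_{k-2}'$. Every leaf is at distance $k$ from all of $Y$, and leaves on opposite sides are at distance $2k$. So $G_k$ is the complete bipartite graph between the $m$ leaves and $Y$, plus the matching $\{q_jq'_{k-2-j}\}$, and is triangle-free. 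This graph has $|E(G_k)|=n^2/4-(k-1)n+O(k^2)$, so it satisfies everything you extract from stability. No two vertices of $X$ are at distance $k$, yet half of $X$ lies in the layer $L_{2k}$ of a leaf $x_0$. It fails the hypothesis $|E(G_k)|\ge (n-k+1)^2/4$ only because it wastes $2(k-1)$ vertices instead of $k-1$.

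Hence localisation cannot be derived from triangle-freeness plus the rough bipartite structure; it must use the exact edge count. The deficit you must detect, about $(k-1)n/2$, is of the same order as the $O(kn)$ error in F\"uredi's stability theorem. What is missing is a quantitative argument that every deviation from a single spine of $k-1$ vertices costs strictly more. Such deviations include a second cluster, a middle vertex off the spine, or a centre of large $G_k$-degree. Your outline does not supply this argument.

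The counting step also rests on unsupported claims:
\begin{itemize}
\item Even in the double broom, the other leaves at $x_0$'s centre lie in $L_2$, not in $L_0\cup L_1$.
\item For $z\in L_i$ the layering gives only $d(z,y)\ge k-i$ for $y\in L_k$, not $d(z,y)\le k-i$. A middle vertex off the shortest $x_0$--$Y$ paths can therefore be at distance exactly $k$ from many vertices of $Y$, so ``$e_M$ is tiny'' is unproved.
\item More fundamentally, ``tiny'' is not enough for an exact bound. When $|M|=k-1$ exactly, $|X'||Y'|$ can equal $(n-k+1)^2/4$, so every $G_k$-edge at a middle vertex must be paid for by a missing $X'$--$Y'$ pair or by imbalance. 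You give no such charging argument.
\item The passage from the $k-3$ layer vertices to the required $k-1$ is only asserted.
\item The equality case $G_k\cong D_k$ is not argued.
\end{itemize}
The reduction to connected $G$ is fine, but everything after the stability step describes the expected answer rather than proving it.
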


They made the following conjecture, implying that the conclusion of Theorem~\ref{TU} holds for $k=3$ and $n\ge 9$.

\begin{conjecture}[Tyomkyn--Uzzell~\cite{tyomkyn2013turan}]\label{conjTU}
For $k\ge 3$ and $n\ge k+1$ except when $k=3$ and $n=8$,  every $n$-vertex graph $G$ such that $G_k$ is triangle-free satisfies
\[
|E(G_k)| \le \frac{(n-k+1)^2}{4}.
\]
Moreover, if equality holds, then $G_k$ is isomorphic to $D_k$, where $D$ is a balanced double broom.
\end{conjecture}
%casey - we should mention what happens in the (3,8) case.

Li, Ma, Shi and Yue~\cite{li2015note} showed that Conjecture~\ref{conjTU} for $k=3$ holds if one includes the additional assumption that the graph contains a vertex whose neighborhood is covered by at most two cliques.

%Consider the graph $H$ defined by a path $uvwz$ with $a$ pendant edges adjacent to $w$ and $b$ pendant edges adjacent to $z$ so that $a+b=n+4$. $H_3$ has $(a+1)(b+1)$ edges but only one isolated vertex, so it is not isomorphic to $D_3$.  Choosing $a$ and $b$ to be balanced, we obtain the required number of edges.  Further modifications of this construction are possible, so it appears that a complete equality characterization would be complicated.    

We prove that the upper bound in Conjecture~\ref{conjTU} holds for $k=3$ without additional assumptions. We further exhibit a broader class of extremal graphs, showing that the proposed characterization in Conjecture~\ref{conjTU} is incomplete. In particular, we construct a graph $H$ whose $3$-distance graph attains the bound, but for which $H_3$ is not isomorphic to $D_3$ for all $n\geq 9$.

\begin{theorem}\label{G3}
For $n\geq 18$,
\[
\ex_3(n,K_3) = \floor{\left( \frac{n-2}{2} \right)^2}.
\]
\end{theorem}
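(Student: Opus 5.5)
The lower bound comes from the double broom. Take an edge $uv$, attach $a$ leaves to $u$ and $b$ leaves to $v$, with $a+b=n-2$. Two vertices are at distance three exactly when one is a leaf of $u$ and the other a leaf of $v$. Hence $G_3$ is the complete bipartite graph $K_{a,b}$ plus isolated vertices, so it is triangle-free. Choosing $a=\lfloor (n-2)/2\rfloor$ gives $\lfloor ((n-2)/2)^2\rfloor$ pairs.

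For the upper bound, let $G$ be an $n$-vertex graph with $G_3$ triangle-free. We may assume $G$ is connected: pairs in different components are never at distance three, and merging components would only help, so a standard induction on components handles this. The key structural fact we aim for is that $G_3$ has at least two vertices of very small degree, or more precisely that two vertices can be discarded at the cost of few $G_3$-edges.

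The plan is to exploit an edge $uv$ of $G$ with large common structure. For any $x$ adjacent to $u$, the $G_3$-neighbourhood of $x$ lies in $N_3$-type sets around $u$. We take a vertex $w$ and its BFS layers $L_0=\{w\},L_1,L_2,L_3,\dots$. Vertices of $L_3$ are $G_3$-adjacent to $w$, so $L_3$ is independent in $G_3$. This means every pair in $L_3$ is at distance at most two, which forces $L_3$ to be ``clustered''. We will choose $w$ to be a vertex of maximum $G_3$-degree $\Delta$. The Mantel-type bound $|E(G_3)|\le \sum_{x}\deg_3(x)/2$ is then combined with the fact that the $G_3$-neighbourhood of $w$ is independent, which yields the standard bound $|E(G_3)|\le \Delta(n'-\Delta)$. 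Here $n'$ is the number of vertices that are non-isolated in $G_3$. The main task is therefore to show that at least two vertices must be outside $N_3(w)\cup(\text{other side})$, i.e. that $n'\le n-2$, or that a refined count loses two vertices.

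Concretely, if $G_3$ is bipartite with parts $X,Y$ (the triangle-free non-bipartite case will be handled separately via a Mantel-stability count, since then $|E(G_3)|\le (n-1)^2/5+\dots$ type bounds are far smaller for $n\ge18$), we take $x\in X$, $y\in Y$ at distance three with path $x\,p\,q\,y$. We then show that the internal vertices $p,q$ cannot both carry large $G_3$-degree. Suppose $p$ has a $G_3$-neighbour $z$. Then $z$ is at distance at least two from $x$, and the triangle-freeness of $G_3$ together with distances to $y$ constrains $z$. We would argue that the set $S$ of ``internal'' vertices, those lying in the middle of every distance-three path, contributes few edges. This lets us either delete two vertices losing at most $n-3$ edges (and apply induction from a base case around $n=18$), or directly bound the count.

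The main obstacle is precisely this: proving that some two vertices together have small $G_3$-degree, i.e. at most about $(n-2)/2+(n-3)/2$. Without it, only the trivial Mantel bound $(n^2)/4$ follows. This is where the $n\ge 18$ threshold and the exceptional small cases such as $n=8$ enter, and a careful case analysis on the middle edge $pq$ and its neighbourhood is needed.
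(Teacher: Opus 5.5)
\textbf{There is a genuine gap: the upper bound is not proved.} Your lower bound is fine. The balanced double broom has $G_3=K_{a,b}$ plus the two isolated centres, which is one of the paper's constructions.

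What you actually establish is only $|E(G_3)|\le\Delta(n-\Delta)$. Even adding the easy fact that each $x\in L_3$ is within distance two of its BFS ancestors in $L_1$ and $L_2$ gives only $\Delta(n-\Delta-1)\le (n-1)^2/4$. Reaching $\Delta(n-\Delta-2)$ needs, on average, two further non-neighbours for every vertex of $L_3$. That is your ``main obstacle'', it is the entire content of the theorem, and you leave it as ``a careful case analysis is needed''.

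Several supporting claims are also false:
(i) A triangle-free non-bipartite graph can have $\lfloor (n-1)^2/4\rfloor+1>\lfloor (n-2)^2/4\rfloor$ edges (the Erd\H{o}s--Gallai/Andr\'asfai bound quoted in the paper). So no Mantel-stability edge count disposes of the non-bipartite case; you must use that $G_3$ is a distance graph.
(ii) Independence of $L_3$ in $G_3$ only excludes distance exactly three, not distance $\ge 4$. In the path $P_7$ rooted at its middle vertex, $L_3$ is the two endpoints, at distance $6$. So $L_3$ need not be ``clustered''.
(iii) The target $n'\le n-2$ cannot be the mechanism. For odd $n$ the spider with $(n-1)/2$ legs of length two is extremal, yet only its centre is isolated in $G_3$.
(iv) Both inductions you invoke need the bound below $n=18$, where it is not claimed (the Tyomkyn--Uzzell conjecture even excludes $n=8$). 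Removing two vertices requires independent base cases $n=18,19$, and the reduction to connected $G$ needs bounds for small components. Moreover, adding an edge between components can create a triangle in $G_3$, so ``merging only helps'' is unjustified.

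For comparison, the paper supplies the missing saving as follows. Take $v$ of maximum $G_3$-degree with BFS layers $N_1,N_2,N_3=N_{G_3}(v)$; each $u\in N_3$ has $d_{G_3}(u)\le n-\Delta-2$. If every $u\in N_3$ has at least four vertices outside $N_3$ within distance two, double counting gives $|E(G_3)|\le\frac12\bigl(\Delta(n-\Delta-4)+(n-\Delta)\Delta\bigr)=\Delta(n-\Delta-2)$. Otherwise some $u\in N_3$ has $d_{G_3}(u)\in\{n-\Delta-2,n-\Delta-3\}$:
\begin{itemize}
\item the first case is closed by a discharging argument that transfers charge to the two vertices $w_1\in N_1$, $w_2\in N_2$ near $u$;
\item the second is closed by a long case analysis on the three vertices outside $N_3\cup N_{G_3}(u)$, driven by a counting claim that classifies vertices by their $G_3$-neighbourhoods among these three.
\end{itemize}
Odd $n$ is reduced to the even case by vertex duplication when $\Delta\ge (n-1)/2$. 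Otherwise it is handled using the fact that no vertex's neighbourhood is dominated by another's. Nothing playing the role of these steps appears in your proposal, so as written it proves only the lower bound.
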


The proofs of Theorems~\ref{G2} and~\ref{G3} are given in Section~\ref{proofs}.

\section{Proofs of the main results}\label{proofs}
We will make use of the following result about the maximum size of a triangle-free \replaced{nonbipartite}{non-bipartite} graph, which appeared in the paper of Erd\H{o}s~\cite{erdos1962theorem} as a lemma\added{,} where it was attributed to Erd\H{o}s and Gallai, and independently \added{to }Andrásfai. In fact, we will make use of a stronger version appearing in a paper by Kang and Pikhurko~\cite{kang2005maximum}, which also characterizes the equality cases. 

\begin{theorem}[Erd\H{o}s--Gallai, Andrásfai~\cite{erdos1962theorem}, Kang--Pikhurko~\cite{kang2005maximum}]\label{thm:Triangle_free_non_bip}
Let $G$ be an $n$-vertex triangle-free graph that is not bipartite with $n \ge 5$. Then
\[
|E(G)| \le \left\lfloor \frac{(n-1)^2}{4} \right\rfloor + 1.
\]

Equality holds if and only if $G$ is constructed from a Tur\'an graph  $T(n-1,2)$ with bipartition $(A,B)$ 
by fixing a vertex $a_1 \in A$ and a non-trivial partition $B = B' \cup B''$, 
removing the edges between $a_1$ and $B'$, and introducing a new vertex $a_2$ 
with the neighbourhood $\{a_1\} \cup B'$. 

\end{theorem}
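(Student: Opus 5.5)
We would use the classical shortest-odd-cycle argument. Since $G$ is not bipartite, it contains an odd cycle. Let $C=c_0c_1\cdots c_{2k}$ be a shortest one. As $G$ is triangle-free, $k\ge 2$. By minimality $C$ is induced, because a chord would split $C$ into a shorter odd cycle. Let $R=V(G)\setminus V(C)$, so $|R|=n-2k-1$. We count edges in three parts: inside $C$, between $R$ and $C$, and inside $R$.

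\textbf{Key steps.}
First, every $v\in R$ has at most two neighbours on $C$. Suppose $v$ had three neighbours $c_i,c_j,c_l$. They split $C$ into three arcs whose lengths sum to $2k+1$, so some arc has odd length. If that arc has length $\ell<2k-1$, then closing it through $v$ gives an odd cycle of length $\ell+2<2k+1$. Otherwise the two remaining arcs have total length at most $2$, so two of the neighbours are consecutive on $C$ and form a triangle with $v$. Either way we contradict minimality or triangle-freeness. The same reasoning shows that if $k\ge 3$, or if $v$ has two neighbours on $C$, those neighbours are exactly at distance $2$ on $C$.

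Second, $G[R]$ is triangle-free, so Mantel's theorem gives $e(G[R])\le\lfloor (n-2k-1)^2/4\rfloor$. Combining the three parts,
\[
|E(G)|\le (2k+1)+2(n-2k-1)+\left\lfloor\frac{(n-2k-1)^2}{4}\right\rfloor .
\]
For $k=2$ this equals $\lfloor (n-5)^2/4\rfloor+2n-5$. Since $n-5$ and $n-1$ have the same parity, this is exactly $\lfloor (n-1)^2/4\rfloor+1$. Increasing $k$ by one lowers the bound: the constant terms fall by $2$, and the Mantel term falls by roughly $n-2k-2\ge 0$. Hence $k=2$ maximises the right-hand side, and the stated bound follows. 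One should check the small cases where $n-2k-1$ is $0$ or $1$ directly, but they are easy for $n\ge5$.

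\textbf{Equality case, which we expect to be the main obstacle.} Equality forces three things. First, the shortest odd cycle is a $5$-cycle $C$; for the case $2k+1=n$ one checks separately that a $C_n$ with $n\ge 7$ falls short. Second, $G[R]$ is a balanced complete bipartite graph with parts $X,Y$. Third, every $v\in R$ is adjacent to exactly one pair $\{c_{i-1},c_{i+1}\}$.

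The work is then a structural analysis. We record, for each vertex of $R$, the index $i$ of the pair it attaches to. Triangle-freeness, applied to the edges between $X$ and $Y$, constrains which indices can occur. If $x\in X$ and $y\in Y$ are adjacent, their attachment pairs must be disjoint. Since two pairs of distance-two vertices on $C_5$ are disjoint only when their indices differ by $\pm 2$, this severely limits the labelling. We would show the following. All vertices of $X$ attach to a common pair, say $\{c_1,c_4\}$, and so join the side $\{c_1,c_4\}\cup\ldots$. The vertices of $Y$ attach to the pairs $\{c_0,c_2\}$ or $\{c_0,c_3\}$.

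Setting $A=X\cup\{c_0,\ldots\}$ and $B=Y\cup\ldots$ appropriately then exhibits $G$ as $T(n-1,2)$ with one vertex $a_1$ (a cycle vertex) split off. Its neighbours in $B$ divide into $B'\cup B''$ according to which of the two pairs they use, and a new vertex $a_2$ is adjacent to $\{a_1\}\cup B'$. We would carry out the index bookkeeping carefully here, since it is the only genuinely delicate part. Finally, a direct count verifies that every graph of the described form is triangle-free and non-bipartite, because it contains a $C_5$. It also attains exactly $\lfloor (n-1)^2/4\rfloor+1$ edges, which confirms the equality characterisation.
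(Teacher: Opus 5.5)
The paper gives no proof of this theorem. It quotes it from Erd\H{o}s and Kang--Pikhurko and uses it as a black box in Case~2 of the proof of Theorem~\ref{G2}. Your argument is the classical self-contained proof: take a shortest odd cycle $C$, show each outside vertex has at most two neighbours on $C$, and apply Mantel to $R$. The bound is correct. Beyond the citation, it shows why equality forces $G$ to be a blow-up of $C_5$.

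You can handle the comparison over $k$ more cleanly, including the degenerate cases $|R|\in\{0,1\}$. Write $m=n-2k-1$; the right-hand side is then $n+m+\lfloor m^2/4\rfloor$. This is strictly increasing in $m$, so equality forces $k=2$ with no separate checks.

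The equality argument has one outright error and one unjustified step.

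\textbf{The disjointness rule is wrong.} With your convention that index $i$ means the pair $\{c_{i-1},c_{i+1}\}$, two pairs are disjoint exactly when the indices differ by $\pm1$, not $\pm2$. Indices $i$ and $i+2$ share $c_{i+1}$, so applying your stated rule literally would produce triangles. Your final description is consistent with $\pm1$: $X$ attached to $\{c_1,c_4\}$, and $Y$ attached to $\{c_0,c_2\}$ or $\{c_0,c_3\}$. So this is a slip, but it must be corrected. Also, the phrase ``if $k\ge 3$, or if $v$ has two neighbours'' should simply say that any two neighbours of $v$ on $C$ are at distance exactly $2$ along $C$.

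\textbf{The single-index claim needs an argument.} You assert that all of $X$ uses one index without justification. The missing line: for distinct indices $i\neq i'$ modulo $5$, the set $\{i\pm1\}\cap\{i'\pm1\}$ has at most one element. Hence if one side uses two indices, the other side uses exactly one, and you swap $X$ and $Y$ if necessary. For $m\ge 2$ both sides are nonempty by the Mantel equality case, and $m\le 1$ is trivial.

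Let $Y'$ and $Y''$ be the vertices of $Y$ attached to $\{c_0,c_2\}$ and $\{c_0,c_3\}$. Then $G$ is the $C_5$ blow-up with classes $\{c_0\}\cup X$, $\{c_1\}\cup Y'$, $\{c_2\}$, $\{c_3\}$, $\{c_4\}\cup Y''$. The identification you left as ``appropriately'' is:
\begin{itemize}
\item $a_2=c_2$ and $a_1=c_3$;
\item $A=\{c_0,c_3\}\cup X$ and $B=\{c_1,c_4\}\cup Y$;
\item $B'=\{c_1\}\cup Y'$ and $B''=\{c_4\}\cup Y''$.
\end{itemize}
The graph $T(n-1,2)$ is balanced because $\bigl||X|-|Y|\bigr|\le 1$. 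The partition is non-trivial because $c_1\in B'$ and $c_4\in B''$.
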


We now give the proofs of Theorems~\ref{G2} and~\ref{G3}.

\subsection{Triangle-Free Distance-Two Graphs}

\begin{proof}[Proof of Theorem~\ref{G2}]
Let $G$ be an $n$-vertex graph such that $G_2$ is $K_3$-free and
\[
|E(G_2)| \ge \floor{\frac{(n-1)^2}{4}}+1.
\]
Let $v\in V(G)$ be a vertex of maximum degree in $G_2$, and denote $d_{G_2}(v)=\Delta$.
Since $|E(G_2)| \le \frac{n\Delta}{2}$, we have 
\[
\Delta \ge \frac{n-1}{2}.
\]
Set $N=N_{G_2}(v)$, and note that $G_2$ is triangle-free\replaced{,}{, so} the induced subgraph $G_2[N]$ is independent.
We distinguish two cases: $G_2$ is bipartite, and $G_2$ is non-bipartite.

\noindent\textbf{Case 1. $G_2$ is bipartite with bipartition $(A,B)$.}
The set $N$ lies entirely in one bipartition class of $G_2$, say $A$.
If $|A| > |N| = \Delta$, then $|B|\le n-\Delta-1$ and 
\[
|E(G_2)| \le \Delta |B|\le\Delta (n-\Delta-1)
\le \Big\lfloor \frac{(n-1)^2}{4} \Big\rfloor,
\]
a contradiction.

If $A = N$, then every vertex $u \in N$ is \replaced{at the distance of two}{at distance two} from $v$ in $G$.
Therefore, for each such $u$ there exists a vertex 
$x_u \in V(G) \setminus (\{v\} \cup N)$ with $ux_u \in E(G)$.
In particular, $u$ and $x_u$ are not adjacent in $G_2$.
It follows that $d_{G_2}(u) \le n-1-\Delta.$
Summing over $A$, we obtain
\[
|E(G_2)| \le |A|(n-1-\Delta)
\le \Big(\frac{n-1}{2}\Big)^2,
\]
a contradiction.

\noindent\textbf{Case 2. $G_2$ is not bipartite.}
By Theorem~\ref{thm:Triangle_free_non_bip},
$|E(G_2)| \le \Big\lfloor \frac{(n-1)^2}{4} \Big\rfloor + 1,$
which yields the desired bound.
Moreover, by Theorem~\ref{thm:Triangle_free_non_bip}, we have a well-defined structure of $G_2$.
The graph $G_2$  has the following structure. 
Start with the Tur\'an graph $T(n-1,2)$ with bipartition $(A,B)$.
Fix a vertex $a_1 \in A$ and choose a non-trivial partition
$B = B' \cup B''$.
Remove all edges between $a_1$ and $B'$, and add a new vertex $a_2$
whose neighbourhood is exactly $\{a_1\} \cup B'$.\
Let $G'$ be the complement of $G_2$.
The resulting graph $G'$ has diameter $2$, and therefore its distance-two graph is precisely $G_2$.
It remains to show that no proper subgraph of $G'$ has the same
distance-two graph.

Suppose that $G'' \subset G'$ satisfies $(G'')_2 = G_2$.
Since $G_2$ contains all edges between $A \setminus \{a_1\}$ and
$B' \cup B''$, it follows that in $G''$ every vertex of
$A \setminus \{a_1\}$ must be adjacent to both $a_1$ and $a_2$,
and every vertex of $B'$ (respectively $B''$) must be adjacent to
$a_1$ (respectively $a_2$).
Furthermore, since the subgraphs of $G_2$ induced on
$A$, $B'$, and $B''$ are empty, the corresponding induced subgraphs in $G''$ must be complete.
Thus all edges inside $A$, inside $B'$, and inside $B''$
are present in $G''$.

Finally, because $a_1$ and the vertices of $B''$ are adjacent in $G_2$,
there must exist at least one edge between $B'$ and $B''$ in $G''$.
However, since $G_2$ contains no edges inside $B' \cup B''$,
this forces the induced subgraph on $B' \cup B''$ in $G''$
to be complete.
Consequently, all edges of $G'$ are present in $G''$,
and hence $G'' = G'$. This completes the proof.
\end{proof}

\subsection{Triangle-Free Distance-Three Graphs}

\begin{proof}[Proof of Theorem~\ref{G3}]

For the lower bound, consider the following construction. Take a star with $\floor{n/2}$ or $\floor{(n+1)/2}$ leaves and attach all remaining vertices to these leaves arbitrarily. Edges may be added arbitrarily among the vertices of degree one with the same neighbourhood; all resulting graphs have the same number of edges $\floor{\frac{(n-2)^2}{4}}$ in $G_3$, but are \replaced{not isomorphic to the same graph}{not all isomorphic}. 
Indeed, consider two extremal constructions in the case when $n$ is odd. The first is an almost balanced double broom, and the second is a spider with $(n-1)/2$ legs of length two. Their $3$-distance graphs have the same number of edges, but are not isomorphic. 
In the first case, the $3$-distance graph is a complete balanced bipartite graph on $n-2$ vertices and two isolated vertices. 
In the second case, it is obtained from a complete balanced bipartite graph on $n-1$ vertices by deleting a perfect matching and adding an isolated vertex.
These constructions show that $\ex_3(n,K_3) \ge \floor{\frac{(n-2)^2}{4}}$, and that graphs attaining this bound need not have isomorphic $3$-distance graphs.

Let $G$ be an $n$-vertex graph such that $G_3$ is triangle-free and
\[
|E(G_3)| \ge \Big\lfloor \frac{(n-2)^2}{4} \Big\rfloor + 1.
\]
If $\Delta(G_3) \le \frac{n-4}{2}$, then
$
|E(G_3)|
\le \floor{\frac{n}{2}\cdot \frac{n-4}{2}}
< \Big\lfloor \frac{(n-2)^2}{4} \Big\rfloor + 1,$
a contradiction. Hence we have 
\[
   \Delta\mreplaced{\coloneq}{\coloneqq}\Delta(G_3) \ge \frac{n-3}{2}. 
\]

Fix a vertex $v$ with $d_{G_3}(v)=\Delta$ and define
$N_0:=\{v\}$, and for $i\in [3]$ let $N_i$ denote the set of vertices at distance $i$ from $v$ in $G$.
Let $N_{\ge4}$ be the remaining vertices.
Set $n_i \mreplaced{\coloneq}{\coloneqq}|N_i|$, thus we have $n_0=1$, $n_3=\Delta$, $n_1,n_2 \geq 1$ and $n_0+n_1+n_2+n_3+n_{\geq 4}=n$.

Suppose that every vertex $u\in N_3$ has at least four vertices
outside $N_3$ at a distance of at most two in $G$.
Since $G_3$ is triangle-free, $G_3[N_3]$ is an independent set of vertices, thus by assumption each vertex from $N_3$ has degree at most $n-\Delta-4$ in $G_3$.
Using the maximum degree bound for vertices outside $N_3$, we obtain
\[
|E(G_3)|
\le \frac12\big(
\Delta(n-\Delta-4)
+(n-\Delta)\Delta
\big)
\le \Delta(n-\Delta-2)<
\Big\lfloor \frac{(n-2)^2}{4} \Big\rfloor + 1,
\]
a contradiction.

 Since each $u \in N_3$ has at least one vertex in $N_1$ and one in $N_2$
at distance at most two in $G$, these vertices are not adjacent to $u$ in $G_3$, \replaced{thus each vertex from}{so each vertex in} $N_3$ has degree at most $n-\Delta-2$.
Thus, there exists a vertex $u \in N_3$ with
\[
d_{G_3}(u) = n - \Delta - 2
\ \text{or} \
n - \Delta - 3.
\]

\replaced{At first, let us assume}{First, assume} there is a vertex $u\in N_3$ with $d_{G_3}(u) = n - \Delta - 2$.
Let $w_1 \in N_1$ and $w_2 \in N_2$ be the unique vertices
\replaced{at a distance less than three}{at distance at most two} from $u$. Note that $G[V(G)\setminus (N_3\cup \{w_1,w_2\})]$ is an \replaced{indepoendent}{independent} set of vertices.

Since $d_G(u,x)=3$ for every $x\in N_1\setminus\{w_1\}$, and the only vertices at distance at most two from $u$
are  in $N_3\cup \{w_1,w_2\}$, we have $xw_1\in E(G)$.
Moreover, $w_2$ has no neighbors in $N_2$, and its only neighbor in $N_1$ is $w_1$.
The vertex $u$ has no neighbours in $N_{\ge4}$.

Assign charges as follows: vertices in $V(G)\setminus (N_3\cup\{w_1,w_2\})$ receive charge $\Delta$,
vertices in $N_3$ receive charge $n-\Delta-2$,
and $w_1,w_2$ receive charge $0$.
Each charge is an upper bound on the corresponding degree in $G_3$,
except possibly for $w_1,w_2$.
We shall redistribute the charges so that, in the end,
the charge assigned to every vertex is an upper bound on its degree in $G_3$.
If this is achieved, we are done, since the desired bound \deleted{bounds}\added{then follows from} the total charge.

Suppose there exists $w_4 \in N_{\ge 4}$ with $d_G(w_4,w_2)=3$, and let $w_4xyw_2$ be a shortest path. 
If $x,y \in N_3$, then each of $x$ and $y$ has a vertex, namely $w_4$, at distance less than three in $N_{\geq 4}$, and hence can transfer one unit of charge to $w_1$ and $w_2$. 
If $x \in N_{\ge 4}$ and $y \in N_3$, then $y$ has two vertices, $x$ and $w_4$, at distance less than three in $N_{\geq 4}$, and hence can transfer one unit of charge to $w_1$ and $w_2$.
Similarly, if $w_4 \in N_{\ge 4}$ is at distance three from $w_1$ but not from $w_2$ (otherwise this case has already been handled), say via a shortest path $w_4xyw_1$, then $x \in N_3$ and $y \in N_2$, and $x$ can transfer one unit of charge to $w_1$.

Let $u' \in N_3$ be at distance three from $w_i$,
for some $i \in \{1,2\}$.
Then there exists a vertex $w_2' \in N_2 \setminus \{w_2\}$
such that $u'w_2' \in E(G)$. If $w_2'w_1 \in E(G)$, then $w_2'$ is within distance two of 
$w_1$, $w_2$, and $u'$, and thus 
$d_{G_3}(w_2') \le \Delta - 1$.
Accordingly, $w_2'$ transfers one unit of charge to $w_2$.

If $u'$ has a neighbour in $N_3$ that is adjacent to $w_2$,
then $d_G(u',w_2)=2$.
Hence $u'$ transfers one unit of charge to $w_1$,
since it has two vertices in $N_2$ at distance at most two in $G$.
Otherwise, we claim that $d_G(u',w_2) > 3$,
and in this case $u'$ transfers one unit of charge to $w_1$.
Indeed, suppose there exists a path $u'xyw_2$ of length three.
Then $x$ is not a descendant of $w_2$ (in the BFS tree generated from root $v$), and therefore
$x u \notin E(G)$.
We have $x \in N_3$ and $d_G(x,u) \le 3$.
Since $G_3$ is triangle-free and $ux \notin E(G)$
(every neighbour of $u$ is a descendant of $w_2$),
it follows that $d_G(u,x)=2$.
Consequently, $d_G(u,u') = 3$,
contradicting the assumption that $u' \in N_3$.

Thus, every charge bounds the corresponding degree in $G_3$, yielding the desired bound on $|E(G_3)|$.
From here we may assume $d_{G_3}(u) \le n-\Delta-3$ for all $u\in N_3$. 

There is a vertex $u\in N_3$ such that $d_{G_3}(u)=n-\Delta-3$. 
Let $X=N_{G_3}(u)$ and $Y=N_3$ and $V=V(G)\setminus(X\cup Y)=\{v_1,v_2,v_3\}$. 
Let $X_i= N_{G_3}(v_i)\cap X$ and $Y_i= N_{G_3}(v_i)\cap Y$ for any $i\in [3]$. 
Without loss of generality assume $|X_1|\ge |X_2|\ge |X_3|$.
Let $\ell$ be the number of edges in $G_3[V]$,
$\ell\leq 2$ as $G_3$ is triangle-free.

For every subset $A \subseteq [3]$, let $y_A$ (respectively $x_A$)
denote the number of vertices $v \in Y$ (respectively $v \in X$)
whose neighbourhood in $V$ is exactly $\{v_i : i \in A\}$.

We have 
\begin{equation}\label{Eq:X_1_large}
    |X_1|+|X_2|+|X_3|\ge \floor{ \frac{n+1}{2}}-\ell.
\end{equation}
Indeed,  
\[
\Big\lfloor \frac{(n-2)^2}{4} \Big\rfloor + 1 \leq |E(G_3)|\leq |Y|(n-\Delta-3)+\ell+|X_1|+|X_2|+|X_3|\leq \floor{\frac{n^2-6n+8}{4}}+\ell+|X_1|+|X_2|+|X_3|.
\]
By the AM--GM inequality since $\Delta\geq \frac{n-2}{2}$.
By rearranging, we get
\[
|X_1|+|X_2|+|X_3| \geq \floor{\frac{n+1}{2}}-\ell.
\]

\begin{claim}\label{Claim:2.1}

\begin{align*}
|E(G_3)| & \le  y_{\{1,2,3\}}\cdot\min\{-|X_1\cup X_2\cup X_3|+3, 0\}
 +\sum\limits_{1\le i<j\le 3}y_{\{i,j\}}\cdot\min\{-|X_i\cup X_j|+2, 0\}\\
 &+\sum\limits_{i\in[3]}y_{\{i\}}\cdot\min\{-|X_i|+1, 0\}+\Delta(n-\Delta-3)+|X_1|+|X_2|+|X_3|+\ell,
\end{align*}
and
\begin{align*}
|E(G_3)| & \le x_{\{1,2,3\}}\cdot\min\{-|Y_1\cup Y_2\cup Y_3|+3, 0\}+\sum\limits_{1\le i<j\le 3}x_{\{i,j\}}\cdot\min\{-|Y_i\cup Y_j|+2,0 \}\\
 &+\sum\limits_{i\in[3]}x_{\{i\}}\cdot\min\{-|Y_i|+1, 0\}+\Delta(n-\Delta-3)+|Y_1|+|Y_2|+|Y_3|+\ell.
\end{align*}
\end{claim}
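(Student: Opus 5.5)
We count the edges of $G_3$ through the partition $V(G)=X\cup Y\cup V$ with $|V|=3$, and we prove the first inequality; the second is symmetric, with the roles of $X$ and $Y$ exchanged. The key facts are that $Y=N_3$ is independent in $G_3$, since all its vertices are $G_3$-neighbours of $v$ and $G_3$ is triangle-free, and that $X=N_{G_3}(u)$ is independent in $G_3$ for the same reason. Every edge of $G_3$ is therefore of exactly one of the following types: $X$--$Y$, $V$--$V$ (at most $\ell$ edges), $V$--$X$ (exactly $|X_1|+|X_2|+|X_3|$ edges) or $V$--$Y$ (exactly $\sum_A |A|\,y_A$ edges). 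Thus
\[
|E(G_3)| = e_{G_3}(X,Y) + \ell + |X_1|+|X_2|+|X_3| + \sum_{A\subseteq[3]} |A|\, y_A .
\]

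The main step is to bound, for each $y\in Y$, the quantity $d_{G_3}(y,X)+d_{G_3}(y,V)$. By the standing assumption, every vertex of $N_3$ has $G_3$-degree at most $n-\Delta-3=|X|$. Take $y\in Y$ whose neighbourhood in $V$ is exactly $\{v_i:i\in A\}$. Triangle-freeness of $G_3$ says that $y$ has no $G_3$-neighbour in $\bigcup_{i\in A}X_i$, because any such neighbour $x\in X_i$ would close the triangle $y\,x\,v_i$. Hence
\[
d_{G_3}(y,X)\le |X|-\Big|\bigcup_{i\in A}X_i\Big|,
\qquad\text{and so}\qquad
d_{G_3}(y,X)+|A| \le |X| + \Big(|A|-\Big|\bigcup_{i\in A}X_i\Big|\Big).
\]
Independently, the total degree satisfies $d_{G_3}(y,X)+|A|\le d_{G_3}(y)\le |X|$. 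Taking the better of the two bounds gives
\[
d_{G_3}(y,X)+|A| \le |X| + \min\Big\{|A|-\Big|\bigcup_{i\in A}X_i\Big|,\,0\Big\}.
\]
For $A=\emptyset$ this reads $d_{G_3}(y,X)\le |X|$.

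Now sum over $y\in Y$, grouped by $A$. Since $|Y|=\Delta$ and $|X|=n-\Delta-3$, the contribution is $\Delta(n-\Delta-3)$ plus exactly the correction terms written in the claim for $|A|=3,2,1$. Adding the remaining terms $\ell+|X_1|+|X_2|+|X_3|$ yields the first inequality.

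For the second inequality we run the same argument with the roles swapped, summing over $x\in X$. This requires the degree bound $d_{G_3}(x)\le |Y|=\Delta$, which holds because $\Delta$ is the maximum degree of $G_3$. The inequality $d_{G_3}(x,Y)\le |Y|-\big|\bigcup_{i\in A}Y_i\big|$ again comes from triangle-freeness. Summing over $X$, grouped by the neighbourhood type $A$ in $V$, gives $|X|\,\Delta=\Delta(n-\Delta-3)$ plus the stated corrections, and adding the $Y$--$V$ edges $|Y_1|+|Y_2|+|Y_3|$ and the $V$--$V$ edges $\ell$ completes it.

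The only point requiring care, and the main obstacle, is combining two bounds per vertex through the $\min\{\cdot,0\}$ rather than adding them. One bound is the degree cap ($n-\Delta-3$ for $Y$ via the standing assumption, $\Delta$ for $X$ via maximality). The other is the triangle-exclusion bound. Because $\min\{\cdot,0\}$ is taken vertex by vertex, no double counting occurs.
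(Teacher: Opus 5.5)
Your proof is correct and is essentially the paper's argument. For each $y\in Y$ (resp.\ $x\in X$) you take the minimum of the degree cap $n-\Delta-3$ (resp.\ $\Delta$) and the triangle-exclusion bound $|X|-|\bigcup_{i\in A}X_i|+|A|$, sum over the $G_3$-independent set $Y$ (resp.\ $X$), and add the $\ell+|X_1|+|X_2|+|X_3|$ (resp.\ $\ell+|Y_1|+|Y_2|+|Y_3|$) edges not incident to it. Your explicit use of the independence of $X=N_{G_3}(u)$ and $Y=N_{G_3}(v)$ in $G_3$ also makes precise what the paper leaves implicit, namely the disjointness $X\cap Y=\emptyset$ and the exactness of the edge decomposition.
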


\begin{proof}[Proof of Claim~\ref{Claim:2.1}]
We count the number of edges in $G_3$, considering that it is triangle-free. \replaced{At first}{First}, we count all edges incident on vertices in $Y$.
\begin{align*}
|E(G_3)| & \le y_{\{1,2,3\}}\cdot\min\{n-\Delta-3-|X_1\cup X_2\cup X_3|+3, n-\Delta-3\}\\ 
 &+\sum\limits_{1\le i<j\le 3}y_{\{i,j\}}\cdot\min\{n-\Delta-3-|X_i\cup X_j|+2, n-\Delta-3\}\\
 &+\sum\limits_{i\in[3]}y_{\{i\}}\cdot\min\{n-\Delta-3-|X_i|+1, n-\Delta-3\}\\
 &+(\Delta-\sum\limits_{A\subseteq [3], A\neq \varnothing}y_A)(n-\Delta-3)+|X_1|+|X_2|+|X_3|+\ell\\
 &= y_{\{1,2,3\}}\cdot\min\{-|X_1\cup X_2\cup X_3|+3, 0\}
 +\sum\limits_{1\le i<j\le 3}y_{\{i,j\}}\cdot\min\{-|X_i\cup X_j|+2, 0\}\\
 &+\sum\limits_{i\in[3]}y_{\{i\}}\cdot\min\{-|X_i|+1, 0\}+\Delta(n-\Delta-3)+|X_1|+|X_2|+|X_3|+\ell
\end{align*}
The other inequality also holds\added{ similarly}, by \deleted{similar }counting the number of edges incident to the vertices of $X$ \replaced{at first}{first}.
\end{proof}

We consider cases based on the parity of $n$. We first treat the even case, where $\Delta \ge \frac{n-2}{2}$, and then the odd case.

\noindent\textbf{Case 1.} $|X_3|\ge 1$ and $2|n$.
In this case we will show that $|X_2| = |X_3| = 1$ and 
$X_2 = X_3 = \{a\}$.

Suppose not, then there exists a matching of size three between $V$ and $X$,
since by~\eqref{Eq:X_1_large} we have $|X_1| \ge 3$ whenever $n>16$.
Note that a vertex matched to $v_i$ contributes either to 
$x_{\{i\}}$, $x_{\{i,j\}}$, or $x_{\{1,2,3\}}$.
Each such contribution produces a negative term involving
$|Y_i|$ in Claim~\ref{Claim:2.1}.
Indeed,
\begin{align*}
|E(G_3)|
&\le x_{\{1,2,3\}}\!\cdot\!\min\{-|Y_1\!\cup\! Y_2\!\cup\! Y_3|+3,0\}
   + \sum_{1\le i<j\le 3} x_{\{i,j\}}\!\cdot\!\min\{-|Y_i\!\cup\! Y_j|+2,0\} \\
&\quad + \sum_{i\in[3]} x_{\{i\}}\!\cdot\!\min\{-|Y_i|+1,0\}
   + \Delta(n-\Delta-3)
   + |Y_1|+|Y_2|+|Y_3|
   + \ell \\
&\le -|Y_1|+3 -|Y_2|+3 -|Y_3|+3
   + \Delta(n-\Delta-3)
   + |Y_1|+|Y_2|+|Y_3|
   + \ell \\
&\le \Delta(n-\Delta-3)+11
   < \frac{(n-2)^2}{4},
\end{align*}
a contradiction for $n \ge 24$.
Hence we may \replaced{asusme}{assume} that we have $|X_2| = |X_3| = 1$ and 
$X_2 = X_3 = \{a\}$.

If the vertex $a$ is also adjacent to $v_1$ in $G_3$ along with $v_2$ and $v_3$, then $\ell=0$,
since $G_3$ is triangle-free.
By the \replaced{lowerbound}{lower bound} on $\Delta$ we have
\[
|X|=n-\Delta-3 \le \frac{n-4}{2},
\]
and hence
\[
|X_1|\le |X|-1 \le \frac{n-6}{2},
\]
because $X_1\subseteq X \setminus \{v\}$.
Therefore,
\[
|X_1|+|X_2|+|X_3|
\le \frac{n-6}{2}+1+1
< \frac{n}{2},
\]
contradicting~\eqref{Eq:X_1_large}.
Thus, we may assume that $v_1a \notin E(G_3)$, \replaced{thus}{so} $|X_1|\leq \frac{n-8}{2}$.
Furthermore, repeating the above argument, we have
$\ell = 2$ and $|X_1| = \frac{n-8}{2}$.
In particular\added{,} as $v_2v_3\notin E(G_3)$\added{,} we have $v_1v_3,v_1v_2\in E(G_3)$ and $X_1=X\setminus \{v,a\}$.     

The shortest $v$–$u$ path in $G$ is $v v_i v_j u$,
where $\{i,j\}=\{2,3\}$.
Hence
\[
N_G(v_i) \subseteq X \cup \{v_j\}
\quad\text{and}\quad
N_G(v_j) \subseteq Y \cup \{v_i\}.
\]
Now consider a shortest $v_1$–$v_i$ path in $G$,
say $v_1 a_1 a_2 v_i$.
Then $a_2 \in X$, and since
$a_2 v_i, a_2 v_1 \notin E(G_3)$,
it follows that $a_2 = v$.
Consequently $a_1 \in X$, and again
$a_1 v_i, a_1 v_1 \notin E(G_3)$\added{,} it follows that $a_1 = v$.
But $a_1 \neq a_2 = v$, yielding a contradiction.

\vspace{0.5em}
\noindent\textbf{Case 2.} $|X_3|= 0$ and $2|n$.
We have $|X_2| \ge 1$, since
\[
|X_1|+|X_2|+2 \ge \frac{n}{2}
\quad \text{and} \quad
|X_1|\le \frac{n-6}{2}.
\]
If $v_1v_2 \in E(G_3)$, then $v_1$ and $v_2$ have no common neighbour in $G_3$,
so $|X_1|+|X_2|\le |X|-1$, and hence
\[
\ell+|X_1|+|X_2|+|X_3|
\le 2+\frac{n-6}{2}
< \frac{n}{2},
\]
contradicting~\eqref{Eq:X_1_large}.
Thus $v_1v_2 \notin E(G_3)$.

\medskip
\noindent\textbf{Case 2.1.}
$|Y_2|\ge 1$ and $|Y_1|=0$
\quad (or symmetrically $|Y_1|\ge 1$ and $|Y_2|=0$).

If $\ell=2$, then $v_3v_2, v_3v_1 \in E(G_3)$.
Hence $Y_2\cap Y_3=\emptyset$, and therefore
$y_{\{2\}}\ge 1$.
By Claim~\ref{Claim:2.1},
\begin{align*}
    |E(G_3)|
&\le -|X_2|+1+\Delta(n-\Delta-3)
   +|X_1|+|X_2|+|X_3|+2
\le \Delta(n-\Delta-3)+|X_1|+3\\
&\le  \Delta(n-\Delta-3)+n-\Delta-1\le  (\Delta+1)(n-\Delta-3)+2\le \floor{\frac{n}{2}\frac{n-4}{2}}+2\leq \floor{ \frac{(n-2)^2}{4}}+1 
\end{align*}
Using the lower bounds on $\Delta$ and $|E(G_3)|$ together with
AM--GM, we obtain $|X_1|=\frac{n-6}{2}.$ Thus $X_1=X\setminus \{v\}$.

The shortest $v$–$u$ path is either $vv_1v_2u$
or $vv_2v_1u$.
Assume it is $vv_1v_2u$.
Let $v_3a_1a_2v_1$ be a shortest $v_3$–$v_1$ path.
Then $a_2 = v$, since $a_2 \notin X\setminus\{v\}$,
as all these vertices are at distance three from $v_1$,
$a_2 \notin Y$, as the distance from $v$ to every vertex of $Y$ is three,
and $a_2 \neq v_2$, since $d_G(v_3,a_2)\le 2$.
Hence $a_1 \notin Y$, a contradiction.

So the only possible shortest $v$–$u$ path is $vv_2v_1u$. Therefore $N_G(v_2)=\{v,v_1\},$ since $X_1 = X\setminus\{v\}$.
As $|X_2|\geq 1$ and $X_1 = X\setminus\{v\}$, we have $w_1\in X_1\cap X_2$.
The shortest path between $v_2$ and $w_1$ is
$v_2va_1w_1$ for some $a_1\in X$,
and the shortest path between $v_1$ and $w_1$ is
$v_1y_1y_2w_1$
for some $y_1,y_2\in Y\setminus\{u\}$.
Then $d_G(v_2,y_2)=3$,
since neither $v$ nor $v_1$ is adjacent to $y_2$ in $G$.
Moreover, $a_1v_2, a_1y_2 \notin E(G_3)$.
The vertex $y_2$ is adjacent in $G_3$ to exactly one vertex of $V$, namely $v_2$.
Since $a_1\in X\setminus X_2$ is not adjacent to $y_2$ in $G_3$, we obtain
\[
d_{G_3}(y_2)\le (n-\Delta-3)+1-|X_2|-1 = n-\Delta-3-|X_2|.
\]
Thus, the initial upper bound on $|E(G_3)|$ improves by one, and the desired inequality follows.

If $\ell\le 1$\deleted{,} and $\mreplaced{y_{2}}{y_{\{2\}}}\geq 1$ then\added{,} by Claim~\ref{Claim:2.1},
\begin{align*}
    |E(G_3)|
&\le -|X_2|+1+\Delta(n-\Delta-3)
   +|X_1|+|X_2|+|X_3|+1
\le \Delta(n-\Delta-3)+|X_1|+2\\
&\le  \Delta(n-\Delta-3)+n-\Delta-2\le  (\Delta+1)(n-\Delta-3)+1\le \floor{\frac{n}{2}\cdot\frac{n-4}{2}}+1\leq \floor{ \frac{(n-2)^2}{4}} 
\end{align*}
we are done. 
Since $Y_2$ is not empty, we have $Y_2 \subseteq Y_3$. \added{Hence we may assume $y_{\{2\}}=0$. }If $\ell\le 1$\deleted{,} and $\mreplaced{y_{2}}{y_{\{2,3\}}}\geq 1$ then\added{,} by Claim~\ref{Claim:2.1},
\begin{align*}
    |E(G_3)|
&\le -|X_2|+2+\Delta(n-\Delta-3)
   +|X_1|+|X_2|+|X_3|+1
\le \Delta(n-\Delta-3)+|X_1|+3\\
&\le  \Delta(n-\Delta-3)+n-\Delta-1\le  (\Delta+1)(n-\Delta-3)+2\le \floor{\frac{n}{2}\cdot\frac{n-4}{2}}+2\leq \floor{ \frac{(n-2)^2}{4}}+1 
\end{align*}
Hence we have $\Delta=\frac{n-2}{2}$, $|X_1|=\frac{n-6}{2}$, $X_1=X\setminus \{v\}$, $\ell=1$, and $Y_2\subseteq Y_3$.
Thus $v_2v_3\notin E(G_3)$ and $v_1v_3\in E(G_3)$.

Moreover, by Claim~\ref{Claim:2.1} we have
\begin{equation}\label{Eq:c2.2.2.l=1}    
|E(G_3)| \le -|Y_2|+2+\Delta(n-\Delta-3)+|Y_2|+|Y_3|+1\leq \Delta(n-\Delta-3)+|Y_3|+3.
\end{equation}
Using the lower bounds on $\Delta$ and $|E(G_3)|$ together with
AM--GM, we obtain  $|Y_3|\ge \frac{n-6}{2}$.

There are four possible shortest $v$–$u$ paths.

(i) $v v_1 v_2 u$.
Then $N_G(v)=\{v_1\}$ and $N_G(v_1)=\{v,v_2\}$ since $X_1=X\setminus \{v\}$.
Hence every vertex of $Y$ is adjacent to $v_2$ as $N_{G_3}(v)=Y$, so $|Y_2|=0$, a contradiction.

(ii) $vv_2v_3u$.
Consider a shortest $v_1$–$v_3$ path $v_1a_1a_2v_3$.
As $d_G(u,a_2)<3$ we have $a_2\in Y$ or $a_2=v_2$.
If $a_2=v_2$, then $a_1\in X$ so $a_1=v$.
Therefore we have $N_G(v_1)=\{v\}$ and $N_G(v)=\{v_1,v_2\}$ since  $X_1=X\setminus \{v\}$.
The vertex $v_3$ is adjacent in $G$ to every vertex of $Y$,
so $|Y_3|=0$, a contradiction.
Thus $a_2\in Y$, hence $a_1\in Y$.
As $|Y_3|\ge\frac{n-6}{2}$ and $a_1,a_2\notin Y_3$, we have $a_2=u$.
We get $d_G(v,a_1)=4\neq 3$\added{,} a contradiction.

(iii) $vv_2v_1u$.
Then $N_G(v_2)=\{v,v_1\}$ as $N_G(v_1)=X\setminus \{v\}$.
Let $x\in X_2$ and consider a shortest $v_2$–$x$ path $v_2a_1a_2x$.
Then $a_1=v$ and $a_2$ is either $v_3$ or lies in $X\setminus\{x,v\}$.
Next consider a shortest $v_1$–$x$ path $v_1b_1b_2x$ with $b_1,b_2\in Y\setminus\{u\}$.
Then $d_G(v_2,b_2)=3$ and $d_G(a_2,b_2)=2$.
Since $Y_2\subseteq Y_3$, we have $a_2\neq v_3$.
Now consider shortest $a_2$--$u$ path $a_2v_3c_1u$,
with $c_1\neq b_2$ since $d_G(b_2,v_3)=3$.
Hence
\[
N_{G_3}(a_2)\subseteq (Y\setminus\{b_2,c_1\})\cup\{v_1\},
\]
so $d_{G_3}(a_2)\le |Y|-1$. 
Thus, in~\eqref{Eq:c2.2.2.l=1} we counted $|Y|$ neighbours for 
$a_2\in X$ instead of $|Y|-1$.
Since $a_2$ is not at distance three from $v_2$, the bound in~\eqref{Eq:c2.2.2.l=1}  improves by one and we get
\[
|Y_3|\ge \frac{n-4}{2},
\]
a contradiction, as $|Y|=\frac{n-2}{2}$ and $v_3$ is not adjacent to $c_1$ and $u$.

(iv) $v v_3v_2u$.
If $X \subseteq N_G(v_3)$, let $y'\in Y_3$.
Then $y'$ is \replaced{at the distance of three}{at distance three} from both $v$ and $v_3$.
Consider a shortest path $y'a_1a_2v$.
We have $a_1\notin X$, since $X\subseteq N_G(v_3)$,
and $a_1\notin Y\cup V$.
This is impossible.

If $X \not\subseteq N_G(v_3)$ then there is $a_1\in X\setminus \{v\}$ such that $uy_1y_2a_1$ is a shortest $u$–$a_1$ path \deleted{with }\added{and }$y_1\neq v_2$. Then $y_1,y_2\neq v_1$ as $X_1=X\setminus \{v\}$, so $y_1,y_2\in Y$.
Since the distance \added{from }$y_1$ to $v$ is three and $y_1$ is not \replaced{incident with}{adjacent to} vertices \replaced{from}{in} $X$, we have $y_1v_2\in E(G)$.

If $d_{G_3}(y_1)<n-\Delta-3$, then since $y_1\notin Y_1\cup Y_2 \cup Y_3$  by Claim~\ref{Claim:2.1},
\[
    |E(G_3)|\le -|X_2|+2+\Delta(n-\Delta-3)-1
   +|X_1|+|X_2|+|X_3|+1\leq \floor{ \frac{(n-2)^2}{4}}. 
\]
If $d_{G_3}(y_1)=n-\Delta-3$, then replacing $u$ by $y_1$ and $y_1$ by $y_2$
and repeating the same argument, we obtain $y_2v_2\in E(G)$,
contradicting $|Y_3|\ge \frac{n-6}{2}$.

\noindent\textbf{Case 2.2.} $|Y_2|=|Y_1|=0$.
 In this case the graphs $G_3[X\cup \{v_3\}]$ and 
$G_3[Y\cup \{v_1,v_2\}]$ are empty, since $v_1v_2\notin E(G_3)$
and $|X_3|=0$.
Hence
\[
|E(G_3)|
\le \Delta\,|X\cup \{v_3\}|
= \Delta(n-\Delta-2)
\le \frac{(n-2)^2}{4},
\]
a contradiction.

\noindent\textbf{Case 2.3.} $|Y_2|\ge 1$ and $|Y_1|\ge 1$.
In this case, we have $|Y_1|=|Y_2|=1$ and $Y_1=Y_2=\{y_1\}$. 
Since otherwise, there are two distinct vertices \replaced{incident with}{adjacent to} $v_1$ and $v_2$, and by Claim~\ref{Claim:2.1} we have
\begin{align*}
|E(G_3)| & \le  y_{\{1,2,3\}}\cdot\min\{-|X_1\cup X_2\cup X_3|+3, 0\}
 +\sum\limits_{1\le i<j\le 3}y_{\{i,j\}}\cdot\min\{-|X_i\cup X_j|+2, 0\}\\
 &+\sum\limits_{i\in[3]}y_{\{i\}}\cdot\min\{-|X_i|+1, 0\}+\Delta(n-\Delta-3)+|X_1|+|X_2|+|X_3|+\ell\\
&\leq -|X_1|+3-|X_2|+3+ \Delta(n-\Delta-3)+|X_1|+|X_2|+|X_3|+\ell,
\end{align*}
a contradiction as $n\geq 18$.

If the vertex $y_1$ is also adjacent to $v_3$ in $G_3$ along with $v_1$ and $v_2$, then $\ell=0$,
since $G_3$ is triangle-free. By Claim~\ref{Claim:2.1} we have
\begin{align*}
|E(G_3)| & \le  y_{\{1,2,3\}}\cdot\min\{-|X_1\cup X_2\cup X_3|+3, 0\}
 +\sum\limits_{1\le i<j\le 3}y_{\{i,j\}}\cdot\min\{-|X_i\cup X_j|+2, 0\}\\
 &+\sum\limits_{i\in[3]}y_{\{i\}}\cdot\min\{-|X_i|+1, 0\}+\Delta(n-\Delta-3)+|X_1|+|X_2|+|X_3|+\ell\\
&\leq -|X_1\cup X_2|+3+ \Delta(n-\Delta-3)+|X_1|+|X_2|\leq \Delta(n-\Delta-3)+|X_2|+3\\
&\leq \Delta(n-\Delta-3)+n-3-\Delta-1+3\leq  (\Delta+1)(n-\Delta-3)+2 \\
&\leq \frac{n}{2}\cdot\frac{n-4}{2}+2=\frac{(n-2)^2}{4}+1.
\end{align*}
Thus, either we are done or $X_1=X_2=X\setminus\{v\}$. 

Let $vv_i v_j u$ be a shortest $v$–$u$ path in $G$ with
$\{i,j,k\}=[3]$.
Let $va_1a_2y_1$ be a shortest $v$–$y_1$ path.
Since vertices of $Y$ are at distance three from $v$,
and vertices of $V$ are at distance three from $y_1$,
it follows that $a_1 \in X$.

Then $d_{G}(a_1,v_i)\le 2$ via the path $a_1 v v_i$.
Moreover, since $d_G(a_1,u)=3$ and $a_1$ has no neighbour in $Y$,
it must have a neighbour in $V$.
If this neighbour is $v_i$ or $v_j$, then the distance
from $a_1$ to both vertices is at most two, as $v_i$ and $v_j$ are adjacent.
Otherwise, $a_1$ is adjacent to $v_k$,
and hence its distance to $v_k$ and $v_i$
is at most two,
contradicting the assumption that $v_1$ and $v_2$
are \replaced{distance three to}{at distance three from} all vertices of $X\setminus\{v\}$.
So we may assume $v_3y_1\notin E(G_3)$. 

If $\ell=0$\added{,} then \replaced{considering}{since} $y_1v_3\notin \mreplaced{E(G)}{E(G_3)}$ we have   
\begin{align*}
|E(G_3)| & \le  y_{\{1,2,3\}}\cdot\min\{-|X_1\cup X_2\cup X_3|+3, 0\}
 +\sum\limits_{1\le i<j\le 3}y_{\{i,j\}}\cdot\min\{-|X_i\cup X_j|+2, 0\}\\
 &+\sum\limits_{i\in[3]}y_{\{i\}}\cdot\min\{-|X_i|+1, 0\}+\Delta(n-\Delta-3)+|X_1|+|X_2|+|X_3|+\ell\\
&\leq -|X_1\cup X_2|+2+ \Delta(n-\Delta-3)+|X_1|+|X_2|\leq \Delta(n-\Delta-3)+|X_2|+2   \\
&\leq \Delta(n-\Delta-3)+n-3-\Delta-1+2\leq  (\Delta+1)(n-\Delta-3)+1 \\
&\leq \frac{n}{2}\cdot\frac{n-4}{2}+1=\frac{(n-2)^2}{4}.
\end{align*}

If $\ell=2$, then $v_1v_3,v_2v_3\in E(G_3)$. 
Thus $\{i,j\}=[2]$ as  $vv_iv_ju$ is a path in $G$.
By Claim~\ref{Claim:2.1} we have
\[
|E(G_3)|  \le -(|X_1\cup X_2|-2) +\Delta(n-\Delta-3)+|X_1|+|X_2|+2\leq \Delta(n-\Delta-3)+|X_2|+4.
\]
Thus we have  $|X_1|\ge |X_2|\ge \frac{n-8}{2}$, by the lower bounds for  $\Delta$ and $|E(G_3)| $ together with  AM--GM inequality.

\replaced{Simiraly}{Similarly} by Claim~\ref{Claim:2.1} we have
\[
|E(G_3)|\leq \Delta(n-\Delta-3)+1+1+|Y_3|+2
\]
Thus we have  $|Y_3|\ge \frac{n-8}{2}$, by the lower bounds for  $\Delta$ and $|E(G_3)| $ together with  AM--GM inequality. 

Consider the paths $v_i a_1 a_2 v_3$ and $v_j b_1 b_2 v_3$.
Since $a_1 \notin V$ and $d_G(v,a_1)\le 2$, we have $a_1 \notin Y$,
and hence $a_1 \in X$.
Similarly, as $b_1 \notin V$ and $d_G(u,b_1)\le 2$, we have $b_1 \notin X$,
and therefore $b_1 \in Y$.
If $a_1 = v$, then $a_2 \in X$.
Moreover, $N_G(v_i)=\{v,v_j\}$,
since $a_2$ is at distance two from $v_i$,
all other vertices of $X$ are at distance three
(because $|X_1|\ge \frac{n-8}{2}$),
and the vertices of $Y$ are at distance at least two.
Consequently,
\[
d_G(v_i,b_2)=3
\quad\text{and}\quad
d_G(v_j,b_2)=2,
\]
contradicting the assumption that $Y_2=Y_1$.

If $a_1\neq v$ then as $|X_1|\ge\frac{n-8}{2}$ there are at most two vertices not in distance three in $X$, namely $v$ and $a_1$, hence $a_2\notin X$ and as $a_2\notin V$ we have $a_2\in Y$ and $a_2\neq y_1$. So we have $|X|=\frac{n-4}{2}$ and thus $|Y|=\frac{n-2}{2}$.

Since $|Y_3| \ge \frac{n-8}{2}$ and $|Y| = \frac{n-2}{2}$,
it follows that $b_1 \in \{y_1,u,a_2\},$
and hence $b_1 = u$.
As $d_G(u,a_1)=3$, we must have $b_2 \ne a_2$.
Therefore, $b_2 \in Y \setminus \{y_1,u,a_2\}.$
However, this contradicts $|Y_3| \ge \frac{n-8}{2}$,
since there are at least four vertices \replaced{at the distance of less than three}{at distance less than three},
while $|Y|=\frac{n-2}{2}$.

If $\ell=1$, then
\[
|E(G_3)|
\le -(|X_1\cup X_2|-2)
   +\Delta(n-\Delta-3)
   +|X_1|+|X_2|+1
\le \Delta(n-\Delta-3)+|X_2|+3.
\]
Using the lower bounds on $\Delta$ and $|E(G_3)|$ together with
the AM--GM inequality, we obtain $|X_1|\ge |X_2|\ge \frac{n-6}{2}.$
Thus $X_1=X_2=X\setminus\{v\}$ and
$Y_3=Y\setminus\{y_1,u\}$.

Since $G_3$ is triangle-free, $v_1v_2\notin E(G_3)$.
As $X_1=X_2$ and $Y_1=Y_2=\{y_1\}$,
we may assume without loss of generality that
$v_3v_1\in E(G_3)$.
The shortest $v$–$u$ path must be one of the following four cases.

(1) $vv_1v_2u$.
Then $N_G(v_1)=\{v_2,v\}$ and
$N_G(v)=\{v_1\}$, since $X_1=X\setminus\{v\}$.
Hence every vertex of $Y$ is adjacent to $v_2$,
contradicting $Y_2=\{y_1\}$.

(2) $vv_2v_1u$.
Then $N_G(v_2)=\{v_1,v\}$ and
$N_G(v)\subseteq\{v_2,v_3\}$.
Let $y_1a_1a_2v_1$ be a shortest $y_1$–$v_1$ path.
Since $a_2\neq v_2$, we have $a_1,a_2\in Y$.
Thus $d_G(a_1,v_2)=3$ and $d_G(a_1,v_1)=2$,
a contradiction.

(3) $vv_2v_3u$.
Then $N_G(v_2)=\{v_3,v\}$ and
$N_G(v)\subseteq\{v_2,v_1\}$.
Since every vertex of $Y$ is at distance three from $v$,
it follows that $v_3$ is adjacent to all vertices of $Y$,
a contradiction.

(4) $vv_3v_2u$.
Then $N_G(v_2)=\{u,v_3\}$ and
$N_G(v_3)=\{v,v_2\}$.
For any $w_1\in X\setminus\{v\}$,
the only shortest path between $w_1$ and $v_2$ is
$v_2v_3vw_1$.
Hence $v$ is adjacent to all vertices of $X\setminus\{v\}$.
Thus $N_G[X\setminus\{v\}]=X$ and
$d_G(v_1,a)\ge4$ for all $a\in X$,
a contradiction.

We have finished the proof in the case where $n$ is even, and turn to the case where $n$ is odd.

First, suppose that $\Delta \ge \frac{n-1}{2}$. Consider the graph $G'$ obtained from $G$ by duplicating a vertex of degree $\Delta$ in $G_3$. It is straightforward to verify that this operation does not change the distances between any pair of vertices. In particular, $G'$ has one more vertex than $G$, and hence an even number of vertices. Therefore, the previously established bound for the even case applies to $G'$.
Consequently, we obtain
\[
|E(G_3)| = \mreplaced{|E(G')|}{|E((G')_3)|} - \Delta \le \frac{(n-1)^2}{4} - \frac{n-1}{2} = \left\lfloor \frac{(n-2)^2}{4} \right\rfloor.
\]
It follows that we may assume $\Delta = \frac{n-3}{2}$ and $|E(G_3)|\geq \frac{n^2-4n+7}{4}$. In particular, for the vertices $u$ and $v$, we have
\[
d_{G_3}(v) = d_{G_3}(u) = \frac{n-3}{2}.
\]

Therefore $|E(G_3)|-\Delta(n-3-\Delta)\geq  \frac{n^2-4n+7}{4}- \frac{n^2-6n+9}{4}=\frac{n-1}{2}$, and  by Claim~\ref{Claim:2.1}, we get
\begin{equation}\label{Eq:Claim_1_odd_1}
\begin{split}
\frac{n-1}{2} & \le  y_{\{1,2,3\}}\cdot\min\{-|X_1\cup X_2\cup X_3|+3, 0\}
 +\sum\limits_{1\le i<j\le 3}y_{\{i,j\}}\cdot\min\{-|X_i\cup X_j|+2, 0\}\\
&+\sum\limits_{i\in[3]}y_{\{i\}}\cdot\min\{-|X_i|+1, 0\}+|X_1|+|X_2|+|X_3|+\ell.
\end{split}
\end{equation}
and
\begin{equation}\label{Eq:Claim_1_odd_2}
\begin{split}
\frac{n-1}{2} & \le x_{\{1,2,3\}}\cdot\min\{-|Y_1\cup Y_2\cup Y_3|+3, 0\}
+\sum\limits_{1\le i<j\le 3}x_{\{i,j\}}\cdot\min\{-|Y_i\cup Y_j|+2,0 \}\\
&+\sum\limits_{i\in[3]}x_{\{i\}}\cdot\min\{-|Y_i|+1, 0\}+|Y_1|+|Y_2|+|Y_3|+\ell.
\end{split}
\end{equation}

\begin{claim}\label{odd1}
For all distinct vertices $w,w'\in V(G)$, we have
\[
N_G(w)\setminus \{w'\}\not\subseteq N_G(w').
\]
\end{claim}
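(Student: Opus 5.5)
The plan is a vertex-deletion argument that reduces the odd case to the already settled even case. Suppose, for a contradiction, that there are distinct $w,w'\in V(G)$ with $N_G(w)\setminus\{w'\}\subseteq N_G(w')$. Let $G^-:=G-w$, which has $n-1$ vertices. We will show that $G^-$ preserves all distances among the vertices of $V(G)\setminus\{w\}$. Then $(G^-)_3$ is exactly $G_3-w$, so
\[
|E(G_3)| = |E((G^-)_3)| + d_{G_3}(w).
\]
We then bound both terms on the right.

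\textbf{Step 1 (distances are preserved).} Take $x,y\neq w$ and a shortest $x$–$y$ path $P$ in $G$. If $P$ avoids $w$ there is nothing to do. Otherwise $w$ is an interior vertex of $P$ with path-neighbours $a,b$ on $P$. If neither $a$ nor $b$ equals $w'$, then both lie in $N_G(w)\setminus\{w'\}\subseteq N_G(w')$, and replacing $w$ by $w'$ gives an $x$–$y$ walk of the same length in $G^-$. If, say, $a=w'$, then $b\in N_G(w')$, so the segment $w'wb$ can be shortcut to the edge $w'b$, giving a shorter walk. In both cases $d_{G^-}(x,y)\le d_G(x,y)$. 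The reverse inequality is trivial since $G^-$ is a subgraph, so the distances agree. In particular $(G^-)_3=G_3-w$ is triangle-free.

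\textbf{Step 2 (counting).} Since $n$ is odd and $n\ge 19$, the graph $G^-$ has an even number $n-1\ge 18$ of vertices. The even case, already proved above, therefore gives
\[
|E((G^-)_3)|\le \frac{(n-3)^2}{4}.
\]
We also have $d_{G_3}(w)\le\Delta=\frac{n-3}{2}$. Combining,
\[
|E(G_3)|\le \frac{(n-3)^2}{4}+\frac{n-3}{2}=\frac{n^2-4n+3}{4}<\frac{n^2-4n+7}{4},
\]
which contradicts the standing assumption $|E(G_3)|\ge \frac{n^2-4n+7}{4}$ in the odd case.

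The only delicate point is Step 1 when $w'$ itself lies on the shortest path through $w$. This is exactly why the hypothesis removes $w'$ from $N_G(w)$, and the shortcut handles it. One must also confirm that the even-case bound was proved for every even order $\ge 18$, rather than only for the particular $G$, so that it can be applied to $G^-$. Since the even-case argument started from an arbitrary graph $G$, this holds.
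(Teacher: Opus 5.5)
Your proof is correct and follows the same route as the paper. You delete $w$, observe that distances among the remaining vertices are unchanged (your Step 1 spells out the substitution and shortcut via $w'$ that the paper leaves implicit), apply the even-order bound to the $(n-1)$-vertex graph $G-w$, and add $d_{G_3}(w)\le \frac{n-3}{2}$ to get $|E(G_3)|\le \frac{(n-1)(n-3)}{4}$, which contradicts the standing lower bound.
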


\begin{proof}
Suppose there exist distinct vertices $w,w'\in V(G)$ such that
$N_G(w)\setminus \{w'\}\subseteq N_G(w').$
Delete the vertex $w$ from $G$. In the graph $G_3$, this operation removes at most $\frac{n-3}{2}$ edges, since every vertex has degree at most $\frac{n-3}{2}$ in $G_3$ and $N_G(w)\setminus \{w'\}\subseteq N_G(w')$.

Since $n-1$ is even, the extremal bound for graphs on $n-1$ vertices has already been established. Hence,
\[
|E(G_3)|\le \frac{(n-3)^2}{4}+\frac{n-3}{2}
= \frac{(n-1)(n-3)}{4},
\]
which yields the desired upper bound.
\end{proof}
\begin{claim}\label{odd2}
Let $vw_1w_2u$ be a shortest $v$--$u$ path, and let $w_3$ be the unique vertex in $\{v_1,v_2,v_3\}\setminus \{w_1,w_2\}$. 
Then $vw_3,uw_3\notin E(G)$, and there exist vertices $v'\in N_G(v)\setminus N_G[w_1]$ and $u'\in N_G(u)\setminus N_G[w_2]$ such that $v'w_3,u'w_3\in E(G)$. 
Moreover, if $|X_i|=\frac{n-5}{2}$, then $v_i=w_2$, and if $|Y_i|=\frac{n-5}{2}$, then $v_i=w_1$.
\end{claim}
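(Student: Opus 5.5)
Throughout we are in the odd case with $\Delta=\frac{n-3}{2}$, $d_{G_3}(v)=d_{G_3}(u)=\frac{n-3}{2}$, $X=N_{G_3}(u)$, $Y=N_{G_3}(v)=N_3$, $V=\{v_1,v_2,v_3\}$, and Claim~\ref{odd1} available. The plan is to first pin down the positions of $w_1,w_2,w_3$ relative to $v$ and $u$, and then apply Claim~\ref{odd1} to extract $v'$ and $u'$. The vertices at distance at most two from $v$ are exactly $X\cup V$, and those at distance at most two from $u$ are exactly $Y\cup V$. On a shortest $v$--$u$ path $vw_1w_2u$ we have $d_G(v,w_2)=2$ and $d_G(w_1,u)=2$, so $w_1,w_2\notin X\cup Y$, forcing $w_1,w_2\in V$. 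Since $w_1\in N_1$ and $w_2\in N_2$, the vertex $w_3\notin\{w_1,w_2\}$ is the only other vertex of $V$.

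\textbf{Step 1: $vw_3,uw_3\notin E(G)$.} Suppose $vw_3\in E(G)$. Then $w_3\in N_1$, so $w_3$ is within distance two of every vertex of $N_1\cup N_2$, and $d_G(w_3,u)\le 2$. I would compare $N_G(v)$ with $N_G(w_1)$: every neighbour $x$ of $v$ lies in $X\cup V$, and the structural information (every vertex of $N_1$ is at distance exactly two from $u$, and $X$ contains $v$) should let us show that either $N_G(v)\setminus\{w_1\}\subseteq N_G(w_1)$ or some similar containment holds, contradicting Claim~\ref{odd1}. Failing that, the cleaner route is a degree count. If $w_3\in N_1$, then $w_3$ has a neighbour in $N_2$ (otherwise $d(w_3,u)\ge 3$ is forced, since $w_3\notin Y\cup$ far set), so $w_3$ is close to $w_2$ or another vertex of $N_2$. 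This restricts $Y_3$ and $X_3$ simultaneously, and \eqref{Eq:Claim_1_odd_1}--\eqref{Eq:Claim_1_odd_2} should then fail. The case $uw_3\in E(G)$ is symmetric under swapping the roles of $v,X$ and $u,Y$.

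\textbf{Step 2: existence of $v'$ and $u'$.} Apply Claim~\ref{odd1} to the pair $(v,w_1)$: there is $v'\in N_G(v)\setminus N_G[w_1]$. Since $v'\in N_1$ is not adjacent to $w_1$, and it is not adjacent to $u$'s side $Y$, the vertex $v'$ must reach $u$ within distance two only via $V$. By Step 1 it is not $w_3$ being adjacent to $v$, so I would argue that $d(v',u)=2$ (as $v'\in X$, indeed $v'\ne v$ and $v'\in X\setminus N_{G_3}(u)$, or $v'\in V$). Since $v'\notin N[w_1]$ and $w_2$ is the only other route, we need $v'w_3\in E(G)$, or else $v'w_2\in E(G)$. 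I would exclude the latter by choosing $v'$ appropriately, or by showing that if every such $v'$ is adjacent to $w_2$, then $N_G(v)\setminus\{w_2\}\subseteq N_G(\cdot)$ gives another containment contradicting Claim~\ref{odd1}. The vertex $u'$ is symmetric.

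\textbf{Step 3: the "moreover" part.} If $|X_i|=\frac{n-5}{2}=|X|-1$, then $v_i$ is at distance three from all of $X\setminus\{v\}$. A vertex $w_1$ adjacent to $v$ is at distance at most two from every neighbour of $v$, so this forces $N_G(v)\subseteq V$. By Step 1, $w_3$ is then also excluded, and the candidate $v'$ from Step 2 lies in $X\setminus\{v\}$ at distance two from $w_3$, ruling out $v_i=w_3$. Hence $v_i=w_2$. The statement for $Y_i$ is symmetric, giving $v_i=w_1$.

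I expect the main obstacle to be Step 1, namely excluding $vw_3\in E(G)$. This will likely require combining Claim~\ref{odd1} with the tight inequalities \eqref{Eq:Claim_1_odd_1}--\eqref{Eq:Claim_1_odd_2}, and I would first try the degree-count route.
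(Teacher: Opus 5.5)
Your Step~1, which you rightly flag as the crux, is not proved. You list two possible routes and carry out neither: a containment between $N_G(v)$ and $N_G(w_1)$, or a degree count against \eqref{Eq:Claim_1_odd_1}--\eqref{Eq:Claim_1_odd_2}. The first also looks at the wrong end of the path: when $vw_3\in E(G)$, Claim~\ref{odd1} applied to $(v,w_1)$ gives no immediate contradiction, since the vertex it provides could be $w_3$ itself.

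The missing idea is to apply Claim~\ref{odd1} to the pair $(u,w_2)$. Suppose $vw_3\in E(G)$. Then $uw_3\notin E(G)$ (else $d_G(u,v)\le 2$), and likewise $uw_1\notin E(G)$. Take $u'\in N_G(u)\setminus N_G[w_2]$. Then $u'\notin X$, because it is adjacent to $u$, and $u'\notin V=\{w_1,w_2,w_3\}$. Hence $u'\in Y$, i.e.\ $d_G(u',v)=3$. Let $a$ be the neighbour of $u'$ on a shortest $u'$--$v$ path. Then $d_G(a,v)=2$ and $d_G(a,u)\le 2$, so $a\in V\cap N_2=\{w_2\}$, contradicting $u'w_2\notin E(G)$. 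Put differently, $vw_3\in E(G)$ would force $N_G(u)\setminus\{w_2\}\subseteq N_G(w_2)$.

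The principle doing the work, here and in Step~2, is that any vertex at distance $\neq 3$ from both $u$ and $v$ lies in $V$. Your description of $X\cup V$ as ``the vertices at distance at most two from $v$'' is inaccurate, since $X$ is defined by distance three from $u$ and contains $v$ itself. Your deduction $w_1,w_2\in V$ is nevertheless correct.

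Step~2 has the right ingredients but goes astray: you propose to show $d_G(v',u)=2$, whereas in fact $d_G(v',u)=3$. Indeed $v'\notin Y$, since it is adjacent to $v$. Also $v'\notin V$, because $v'\ne w_1$, $v'\ne w_2$ (as $w_2\in N_2$), and $v'\ne w_3$ by Step~1. So $v'\in X$. The neighbour of $v'$ on a shortest $v'$--$u$ path is at distance $2$ from $u$ and at most $2$ from $v$, hence lies in $V$. It is not $w_1$, by the choice of $v'$. It is not $w_2$, since $v'w_2\in E(G)$ together with $w_2u\in E(G)$ would give $d_G(v',u)\le 2$. So it is $w_3$. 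In particular, the alternative $v'w_2\in E(G)$, which you try to exclude by choosing $v'$ carefully, is ruled out automatically. The same argument gives $u'w_3\in E(G)$.

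Your Step~3 is then fine. Both $v$ and $v'$ lie within distance two of $w_1$ and of $w_3$, so each of $w_1,w_3$ has at most $|X|-2=\frac{n-7}{2}$ neighbours in $X$ in $G_3$. This forces $v_i=w_2$ whenever $|X_i|=\frac{n-5}{2}$, and symmetrically for $Y$.
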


\begin{proof}
Suppose first that $vw_3\in E(G)$. Since $vw_1w_2u$ is a shortest $v$--$u$ path, it follows that $uw_3\notin E(G)$. By Claim~\ref{odd1}, there exists a vertex $u'\in N_G(u)\setminus N_G[w_2]$. In particular, $u'w_2\notin E(G)$. As $uw_3\notin E(G)$ and $u'\in N_G(u)$, we also have $u'w_1,~u'w_3\notin E(G)$ as $d(v,u')=3$. Consequently, every path from $u'$ to $v$ must have length at least $4$, a contradiction. Hence $vw_3\notin E(G)$. By symmetry, $uw_3\notin E(G)$.

Let $v'\in N_G(v)\setminus N_G[w_1]$ and $u'\in N_G(u)\setminus N_G[w_2]$, whose existence follows from Claim~\ref{odd1}. Since $vw_3,uw_3\notin E(G)$, it follows that $v'\in X$ and $u'\in Y$. 
As the distance is three from $v'$ to $u$, and $u'$ to $v$ we have $v'w_3,u'w_3\in E(G)$.

Finally, suppose that $|X_i|=\frac{n-5}{2}$. 
As $v'w_1,~v'w_3,~vw_1,~vw_3\notin E(G_3)$, and $\abs{X}=\frac{n-3}{2}$, which forces $v_i=w_2$. 
The statement for $|Y_i|=\frac{n-5}{2}$ follows analogously.
\end{proof}

Recall that we assume $|X_1|\ge |X_2|\ge |X_3|$.

\noindent\textbf{Case 3.} $|X_3|\ge 1$ and $2|n-1$. 
As in Case 1\deleted{.}\added{,} we have $|X_2|=|X_3|=1$ and $X_2=X_3=\{a\}$.
Assume first that $|Y_1|\ge 1$. By~\eqref{Eq:Claim_1_odd_1},
\begin{equation*}
\begin{split}
\frac{n-1}{2} & \le  y_{\{1,2,3\}}\cdot\min\{-|X_1\cup X_2\cup X_3|+3, 0\}
 +\sum\limits_{1\le i<j\le 3}y_{\{i,j\}}\cdot\min\{-|X_i\cup X_j|+2, 0\}\\
&+\sum\limits_{i\in[3]}y_{\{i\}}\cdot\min\{-|X_i|+1, 0\}+|X_1|+|X_2|+|X_3|+\ell \\
&\leq -|X_1|+3+|X_1|+|X_2|+|X_3|+\ell \leq  7
\end{split}
\end{equation*}
a contradiction since $n>15$. Hence we have $|Y_1|=0$.

Assume next that $\ell=0$. By~\eqref{Eq:Claim_1_odd_1}, we have $|X_1|=\frac{n-5}{2}$. 
By Claim~\ref{odd2}, we have $v_1=w_2$ and $w_1$ has no neighbor in $X\setminus\{v\}$. Since $d(w_1,w_3)\le 3$, and $\ell=0$ we have $d(w_1,w_3)\leq 2$ and  one of the following holds: (i) $w_1w_3\in E(G)$, (ii) $w_2w_3\in E(G)$, or (iii) $w_1$ and $w_3$ have a common neighbor in $X$. The cases (ii) and (iii) are impossible as $|X_1|=\frac{n-5}{2}$. Thus $w_1w_3\in E(G)$.

Applying \eqref{Eq:Claim_1_odd_2}, we obtain
\[
-|Y_2\cup Y_3|+3+|Y_2|+|Y_3|\ge \frac{n-1}{2}.
\]
Since $w_1w_3\in E(G)$, we have $|Y_2|,|Y_3|\le \frac{n-7}{2}$, and hence
\[
|Y_2|=|Y_3|=\frac{n-7}{2}\quad\text{and}\quad Y_2=Y_3=Y\setminus \{u,u'\}.
\]
It follows that $N_{G_3}(w_1)\cap Y=Y\setminus\{u,u'\}$, and therefore $u$ is adjacent to all vertices in $Y\setminus\{u\}$. Let $u''\in Y\setminus\{u,u'\}$. Then $u''w_2\in E(G)$ or $u''w_3\in E(G)$, contradicting $|Y_2|=|Y_3|=\frac{n-7}{2}$.

Hence $\ell\ge 1$, and moreover $av_1\notin E(G_3)$. As $G_3$ is triangle-free and $av_2,~av_3\in E(G_3)$, we have $v_2v_3\notin E(G_3)$. 
If $\ell=2$, then $\{w_1,w_2\}=\{v_2,v_3\}$. 
We have $d(w_2,v')=3$ since $w_2w_3\notin E(G_3)$.
As $d(w_1,v')=2$ and $\{w_1,w_2\}=\{v_2,v_3\}$ we get $X_2\neq X_3$, a contradiction.
Thus, we have $\ell=1$. 

By~\eqref{Eq:Claim_1_odd_1},
\[
|X_1|+|X_2|+|X_3|+1\ge \frac{n-1}{2},
\]
and hence $X_1=X\setminus\{v,a\}$, $|X_1|=\frac{n-7}{2}$. 
We have $x_{\{1,2,3\}}=0$ and by \eqref{Eq:Claim_1_odd_2},
\[
-|Y_2\cup Y_3|+2+|Y_2|+|Y_3|\ge \frac{n-1}{2},
\]
which implies $|Y_2|,|Y_3|\ge \frac{n-7}{2}$.

Since $\ell=1$, we have $w_1w_3,w_2w_3\notin E(G)$, and thus either $w_2$ and $w_3$ have a common neighbor in $Y$, or $w_1$ and $w_3$ have a common neighbor in $X$.

If $w_2$ and $w_3$ have a common neighbor $u''$ in $Y$, then we have $N_{G_3}(w_2)\cap Y,\,N_{G_3}(w_3)\cap Y\subseteq  Y\setminus \{u,u',u''\},$
implying $|Y_2|\le \frac{n-9}{2}$ or $|Y_3|\le \frac{n-9}{2}$, a contradiction.

Hence $w_1$ and $w_3$ have a common neighbor $v''\in X$. Since $|X_1|=\frac{n-7}{2}$, we have $v_1=w_2$. Moreover, $N_{G_3}(w_3)\cap Y\subseteq Y\setminus\{u,u'\}$, so $|Y_2|\le \frac{n-7}{2}$ or $|Y_3|\le \frac{n-7}{2}$.
We have $v''\notin X_1\cup X_2\cup X_3$ and
\[
d_{G_3}(v'')\le \frac{n-5}{2}.
\]
Thus, compared to the estimate in Claim~\ref{Claim:2.1}, we gain one unit, since there the corresponding degree term was bounded by
\[
n-\Delta-3=\frac{n-3}{2}.
\]
Consequently, the bound in \eqref{Eq:Claim_1_odd_2} improves by one.  
\begin{equation*}
\begin{split}
\frac{n+1}{2} & \le x_{\{1,2,3\}}\cdot\min\{-|Y_1\cup Y_2\cup Y_3|+3, 0\}
+\sum\limits_{1\le i<j\le 3}x_{\{i,j\}}\cdot\min\{-|Y_i\cup Y_j|+2,0 \}\\
&+\sum\limits_{i\in[3]}x_{\{i\}}\cdot\min\{-|Y_i|+1, 0\}+|Y_1|+|Y_2|+|Y_3|+\ell\\
&\leq -|Y_2\cup Y_3|+2+|Y_2|+|Y_3|+1\leq \frac{n-1}{2}
\end{split}
\end{equation*}
since $x_{\{2,3\}}=1$, a contradiction.

\noindent\textbf{Case 4.} $|X_3|=0$ and $2|n-1$. By symmetry, we may assume that $|Y_i|=0$ for some $i\in[3]$.

Suppose first that $|Y_3|=0$. Then if $|X_2|>0$, we have $x_{\{1\}}+x_{\{1,2\}}\ge 2$ and by \eqref{Eq:Claim_1_odd_2} we have
\begin{equation*}
\begin{split}
\frac{n-1}{2} & \le x_{\{1,2,3\}}\cdot\min\{-|Y_1\cup Y_2\cup Y_3|+3, 0\}
+\sum\limits_{1\le i<j\le 3}x_{\{i,j\}}\cdot\min\{-|Y_i\cup Y_j|+2,0 \}\\
&+\sum\limits_{i\in[3]}x_{\{i\}}\cdot\min\{-|Y_i|+1, 0\}+|Y_1|+|Y_2|+|Y_3|+\ell \leq 6,
\end{split}
\end{equation*}
a contradiction as $n>13$.
Hence we have $|X_2|=0$.

By \eqref{Eq:Claim_1_odd_1}, we have $|X_1|=\frac{n-5}{2}$ and $\ell=2$. Consequently, $|Y_1|=0$ and $|Y_2|=\frac{n-5}{2}$. By Claim~\ref{odd2}, it follows that $w_1=v_2$ and $w_2=v_1$. Hence $N_G(v_2)=\{v,v_1\}$ and $N_G(v_1)=\{u,v_2\}$. It follows that $v$ is adjacent to every vertex in $X\setminus\{v\}$ and $u$ is adjacent to every vertex in $Y\setminus\{u\}$.
Thus every vertex in $X\setminus\{v\}$ and in $Y\setminus\{u\}$ is adjacent to $v_3$, since their distances to $u$ and $v$, respectively, are equal to three in $G$. Consequently, any two such vertices are at a distance of at most two, a contradiction.

Thus we may assume that $|Y_1|=0$ (the case $|Y_2|=0$ is symmetric). By \eqref{Eq:Claim_1_odd_1},
\begin{align}\label{eq:2.1_1}
\frac{n-1}{2} & \le y_{\{2,3\}}\cdot\min\{-|X_2|+2, 0\}
+y_{\{2\}}\cdot\min\{-|X_2|+1, 0\}+|X_1|+|X_2|+\ell,
\end{align}
and by \eqref{Eq:Claim_1_odd_2}
\begin{align}\label{eq:2.1_2}
\frac{n-1}{2} & \le x_{\{1,2\}}\cdot\min\{-|Y_2|+2, 0\}
+x_{\{2\}}\cdot\min\{-|Y_2|+1, 0\}+|Y_2|+|Y_3|+\ell.
\end{align}

If $\ell=0$, then we \replaced{do not have}{cannot have} $|X_1|=|Y_3|=\frac{n-5}{2}$. Since \replaced{othervice}{otherwise}   
$v$ is adjacent to every vertex in $X\setminus\{v\}$ and $u$ is adjacent to every vertex in $Y\setminus\{u\}$.
Thus every vertex in $X\setminus\{v\}$ and in $Y\setminus\{u\}$ is adjacent to $v_3$, since their distances to $u$ and $v$, respectively, are equal to three in $G$. Consequently, any two such vertices are at a distance of at most two, a contradiction.
By \eqref{eq:2.1_1} and \eqref{eq:2.1_2} we have $|X_2|\ge 2$ and $|Y_2|\ge 2$, but since $|X_1|$ and $|Y_3|$ cannot both be equal to $\frac{n-5}{2}$, at the same time  we have $|X_2|\ge 2$ and $|Y_2|\ge 3$ (or $|X_2|\ge 3$ and $|Y_2|\ge 2$). 
By \eqref{eq:2.1_2},
\[
\frac{n-1}{2}\le 2(-|Y_2|+2)+|Y_2|+|Y_3|\le 1+|Y_3|\leq \frac{n-3}{2},
\]
a contradiction. If $|X_2|\ge 3$, then by  \eqref{eq:2.1_1} we have a contradiction.

If $\ell=1$ and $v_1v_2\in E(G_3)$ (or symmetrically $v_2v_3\in E(G_3)$), then $|X_1|+|X_2|\le \frac{n-5}{2}$. Hence, by \eqref{eq:2.1_1},
\[
\frac{n-1}{2}\le |X_1|+|X_2|+\ell\le \frac{n-5}{2}+1,
\]
a contradiction.

If $\ell=1$ and $v_1v_3\in E(G_3)$, we may assume $d_G(w_2,w_3)=3$ and $d_G(w_1,w_3)=2$, by symmetry. Thus, we have $w_1=v_2$. Since $d(v_2,u')=3$ and $d(w_2,u'),d(w_3,u')\le 2$, we obtain $y_{\{2\}}\ge 1$. By \eqref{eq:2.1_1},
\[
\frac{n-1}{2}\le -|X_2|+1+|X_1|+|X_2|+1,
\]
and hence $|X_1|=\frac{n-5}{2}$. However,
\[
|N_{G_3}(w_2)\cap X|,\ |N_{G_3}(w_3)\cap X|\le \frac{n-7}{2},
\]
a contradiction.

Finally, assume $\ell=2$. Then $v_1v_2\in E(G_3)$ or $v_2v_3\in E(G_3)$. 
Since $|X_3|=|Y_1|=0$ and the roles of $X$ and $Y$ are symmetric, we may, without loss of generality, assume that $v_1v_2\in E(G_3)$.
Since $G_3$ is triangle-free, we have $X_1\cap X_2=\emptyset$, and hence $|X_1|+|X_2|\le \frac{n-5}{2}$. Thus we have  $|X_1|+|X_2|=\frac{n-5}{2}$,  by \eqref{eq:2.1_1}. 
For each vertex in $Y\setminus Y_2$, Claim~\ref{Claim:2.1} (in \eqref{eq:2.1_1}) contributes at most $\frac{n-3}{2}$ to the upper bound on $|E(G_3)|$. Since equality holds, it follows that every such vertex has degree exactly $\frac{n-3}{2} $ in $G_3$. In particular, for every $w\in Y\setminus Y_2$, we have
\begin{equation}\label{eq:Y-y_2}
d_{G_3}(w)=\frac{n-3}{2}.    
\end{equation}

If $w_1\ne v_3$, then $\{w_1,w_3\}=\{v_1,v_2\}$, and hence $v'\notin X_1\cup X_2$, implying $|X_1|+|X_2|\le \frac{n-7}{2}$, a contradiction. Thus $w_1=v_3$.

Consider $u'$. Since $d(u',w_2), d(u',w_3)\le 2$, we have $u'\notin Y_2$, and hence $d_{G_3}(u')=\frac{n-3}{2}$ and
$N_{G_3}(u')=(X\cup\{v_3\})\setminus\{v'\}.$
Fix $x_1\in X\setminus\{v'\}$ and consider a shortest $u'$--$x_1$ path. There are three possibilities:

\medskip
\noindent\emph{(i)} $u'w_3u''x_1$ for some $u''\in Y\setminus\{u,u'\}$. Since $uu''\notin E(G_3)$, we have $d(u,u'')\le 2$, and hence there exists $y\in Y\setminus\{u,u',u''\}$ such that $uy,u''y\in E(G)$ as $d(u,u'')\le 2$. Then $d(w_2,y),d(w_3,y)\le 2$, implying $y\notin Y_2$ and thus $d_{G_3}(y)=\frac{n-3}{2}$ and $N_{G_3}(y)=(X\cup\{v_3\})\setminus\{x_1\}$. Hence, $w_2y,w_3y\notin E(G)$, which implies $d(y,v)\ge 4$, a contradiction.

\medskip
\noindent\emph{(ii)} $u'y_1y_2x_1$ for some $y_1,y_2\in Y\setminus\{u,u'\}$. Consider a shortest $v$--$y_1$ path. Then $y_1w_2\in E(G)$ or $y_1w_3\in E(G)$. If $y_1w_2\in E(G)$, then $y_1\notin Y_2$ and $w_2,w_1,w_3,x_1\notin N_{G_3}(y_1)$, a contradiction to \eqref{eq:Y-y_2}. 
Thus $y_1w_3\in E(G)$. If $d(w_2,y_1)\le 2$, then again $y_1\notin Y_2$ and $w_2,w_3,v',x_1\notin N_{G_3}(y_1)$, a contradiction to \eqref{eq:Y-y_2}. 
Hence $d(w_2,y_1)=3$, which implies $w_2=v_2$ and $w_3=v_1$. 
Since $uy_2\notin E(G_3)$, we have $d(u,y_2)\le 2$, and hence there exists $y\in Y\setminus\{u',y_1\}$ such that $yu,yy_2\in E(G)$. Then $d(v_2,y)\le 2$, so $y\notin Y_2$ and $d_{G_3}(y)=\frac{n-3}{2}$. 
Since $uy\in E(G)$ and $d_{G}(y,v)=3$, either $v_2y\in E(G)$ or $v_1y\in E(G)$. 
In the first case, $v_2,v_3,x_1\notin N_{G_3}(y)$, forcing $v_1\in N_{G_3}(y)$, a contradiction to $|Y_1|=0$. In the second case, $v_1,v',x_1,v_2\notin N_{G_3}(y)$, also a contradiction to \eqref{eq:Y-y_2}.

\medskip
\noindent\emph{(iii)} $u'w_3v'x_1$. Assume that all shortest $u'$--$x_1$ paths with $x_1\in X\setminus\{v'\}$ are of this form; otherwise, we are done by the previous cases.
Then $v'$ is adjacent to every vertex in $X\setminus\{v'\}$, while $w_3$ is adjacent to none of them. 
Consequently, $|X_2|=0$ and $|X_1|=\frac{n-5}{2}$, which implies $v_1=w_2$ and $v_2=w_3$. 
Let $v''\in X\setminus\{v,v'\}$. Since $v''v_3\notin E(G_3)$, we have $d(v'',v_3)\le 2$, and hence $vv''\in E(G)$. 
On the other hand, $v''v_3,v''v_2\notin E(G)$, which implies $d(v'',u)\ge 4$, a contradiction.
\end{proof}

\section*{Acknowledgments}
The research of He was supported by the National Natural Science Foundation of China, grant 12401445. 
The research of Salia was supported by the National Science Centre grant 2021/42/E/ST1/00193.
The author Zhu is supported by NSFC under grant 12401445, Basic Research Program of Jiangsu Province(BK20241361).

\bibliographystyle{abbrv}
\bibliography{references.bib}

@article{li2015note,
  title={Note on a {Tur\'an}-type problem on distances},
  author={Li, Xueliang and Ma, Jing and Shi, Yongtang and Yue, Jun},
  journal={Ars Comb.},
  volume={119},
  pages={211--219},
  year={2015}
}

@article{kang2005maximum,
  title={Maximum {$K_{r+1}$}-free graphs which are not $r$-partite},
  author={Kang, Mihyun and Pikhurko, O},
  journal={Matematychni studii},
  volume={24},
  number={1},
  pages={12--20},
  year={2005},
  publisher={VNTL Publishers}
}

@article{tyomkyn2013turan,
  title={A {Tur{\'a}n}-type problem on distances in graphs},
  author={Tyomkyn, Mykhaylo and Uzzell, Andrew J},
  journal={Graphs and Combinatorics},
  volume={29},
  number={6},
  pages={1927--1942},
  year={2013},
  publisher={Springer}
}

@article{bollobas2012walks,
  title={Walks and paths in trees},
  author={Bollob{\'a}s, B{\'e}la and Tyomkyn, Mykhaylo},
  journal={Journal of Graph Theory},
  volume={70},
  number={1},
  pages={54--66},
  year={2012},
  publisher={Wiley Online Library}
}

@article{csikvari2010poset,
  title={On a poset of trees},
  author={Csikv{\'a}ri, P{\'e}ter},
  journal={Combinatorica},
  volume={30},
  number={2},
  pages={125--137},
  year={2010},
  publisher={Springer}
}

@article{erdos1962theorem,
  title={On a theorem of {Rademacher}-{Tur{\'a}n}},
  author={Erd{\H{o}}s, Paul},
  journal={Illinois J. Math},
  volume={6},
  number={122-127},
  pages={1--3},
  year={1962}
}

\end{document}